\theoremstyle{comment}
\numberwithin{equation}{section}
\newtheorem{theorem}{Theorem}[section]
\newtheorem{problem}{Problem}[section]
\newtheorem{construction}{Construction}[section]
\newtheorem{lemma}[theorem]{Lemma}
\newtheorem{proposition}[theorem]{Proposition}
\newtheorem{corollary}[theorem]{Corollary}
\newtheorem{example}[theorem]{Example}
\begin{document}

\title{Highly symmetric hypertopes}

\author{Maria Elisa Fernandes}
\address{
Maria Elisa Fernandes, Center for Research and Development in Mathematics and Applications, Department of Mathematics, University of Aveiro, Portugal
}
\email{maria.elisa@ua.pt}

\author{Dimitri Leemans}
\address{Dimitri Leemans, Department of Mathematics, University of Auckland, Private Bag 92019, Auckland 1142, New Zealand
}
\email{d.leemans@auckland.ac.nz}
\author{Asia Ivi\'c Weiss}
\address{Asia Ivi\'c Weiss, Department of Mathematics and Statistics, York University, Toronto, Ontario M3J 1P3, Canada
}
\email{weiss@mathstat.yorku.ca}

\begin{abstract}
We study incidence geometries that are thin and residually connected. 
These geometries generalise abstract polytopes.
In this generalised setting, guided by the ideas from the polytopes theory, we introduce the concept of chirality, a property of orderly asymmetry occurring frequently in nature as a natural phenomenon. 
The main result in this paper is that automorphism groups of regular and chiral thin residually connected geometries need to be $C$-groups in the regular case and $C^+$-groups in the chiral case.
\end{abstract}
\maketitle
\noindent \textbf{Keywords:} Regularity, chirality, thin geometries, hypermaps, abstract polytopes

\noindent \textbf{2000 Math Subj. Class:} 51E24, 52B11, 20F05.

\section{Introduction}
Guided by the ideas of chirality in the polytope theory (see~\cite{SW1991}), the present paper extends the concept to a more general setting of incidence geometries. Indeed, when an incidence geometry is thin, it is possible to define chirality, as in the case of polytopes. It is then interesting to study how the group-theoretic counterpart of chiral polytopes extends in this more general framework, as a chiral polytope is a thin incidence geometry with a linear diagram.
The purpose of this paper is to explore this more general framework for chiral geometries and also to take this opportunity to look at the regular case in a more detailed way.

In Section~\ref{section2}, we state the basic definitions about regular and chiral hypertopes and generalise a result of McMullen and Schulte on strong connectivity in polytopes (see~\cite[Proposition 2A1]{ARP}) to thin strongly chamber connected incidence geometries that we call hypertopes and that are a natural generalisation of polytopes as it will appear in later sections.
In Section~\ref{section3}, we recall how to construct geometries from groups (that is coset geometries) using an algorithm due to Jacques Tits and how to check residual connectedness and flag-transitivity on coset geometries.
In Section~\ref{section4}, we prove that the automorphism group of a regular hypertope is a smooth quotient of a Coxeter group and we show that the converse is not always true. The main results of this section are found in the following theorems: Theorem~\ref{cgroup} shows that a natural smooth quotient of a Coxeter group can be associated to a regular hypertope; Theorem~\ref{theorem46}  shows that if a geometry $\Gamma$ is constructed from a group $G$ and a generating set of involutions satisfying an intersection property, $\Gamma$ will be a regular hypertope provided $G$ is flag-transitive on $\Gamma$.
In Section~\ref{section5}, we give examples of regular hypertopes of rank 3 that are not polytopes, that is, that do not have a linear diagram.
In Sections~\ref{section6},~\ref{section7} and~\ref{section8}, we characterise the automorphism groups of regular and chiral hypertopes. 
%We show that the concept of duality in chiral hypertopes gives three interesting classes of self-dual hypertopes, namely the properly self-dual and the improperly self-dual hypertopes, the ones that include both proper and improper dualities.
The main results are Theorems~\ref{cplusgroup} and~\ref{elisa} that 
describe the rotational subgroup of the automorphism group of a regular or chiral hypertope.
In particular, the new set of generators given for the automorphism group of chiral hypertopes is an independent generating set while the previous characterisation of Schulte and Weiss of automorphism groups of chiral polytopes (see~\cite{SW1991}) did not give such an independent set of generators.
This is extremely useful for instance if we want to bound the rank of chiral hypertopes or polytopes with prescribed automorphism groups in the spirit of~\cite{FL1}.
We also give examples and classifications of certain types of rank four hypertopes where sections are embeddings of maps or hypermaps on the torus.
In Section~\ref{section9}, we conclude the paper with some open problems and acknowledgements.
\section{Regular and chiral hypertopes}\label{section2} 
%\subsection{Thin geometries}
In~\cite{Tits61}, Jacques Tits introduced the concept of geometry as an object generalizing the notion of incidence and established its close relation with groups  (see also~\cite[Chapter 3]{Buek95}).
Following~\cite{BuekCohen}, we begin by defining an incidence system $\Gamma$ (also called pre geometry or incidence structure in~\cite{Buek95, Pasini94}).

An {\it incidence system} $\Gamma := (X, *, t, I)$ is a 4-tuple such that
\begin{itemize}
\item $X$ is a set whose elements are called the {\it elements} of $\Gamma$;
\item $I$ is a set whose elements are called the {\it types} of $\Gamma$;
\item $t:X\rightarrow I$ is a {\it type function}, associating to each element $x\in X$ of $\Gamma$ a type $t(x)\in I$;
\item $*$ is a binary relation on $X$ called {\em incidence}, that is reflexive, symmetric and such that for all $x,y\in X$, if $x*y$ and $t(x) = t(y)$ then $x=y$.
\end{itemize}
The {\it incidence graph} of $\Gamma$ is the graph whose vertex set is $X$ and where two vertices are joined provided the corresponding elements of $\Gamma$ are incident.
A {\it flag} is a set of pairwise incident elements of $\Gamma$, i.e. a clique of its incidence graph.
The {\it type} of a flag $F$ is $\{t(x) : x \in F\}$.
A {\it chamber} is a flag of type $I$.
An element $x$ is {\em incident} to a flag $F$ and we write $x*F$ for that, provided $x$ is incident to all elements of $F$.
An incidence system $\Gamma$ is a {\it geometry} or {\it incidence geometry} provided that every flag of $\Gamma$ is contained in a chamber (or in other words, every maximal clique of the incidence graph is a chamber).
The {\it rank} of $\Gamma$ is the number of types of $\Gamma$, namely the cardinality of $I$.

We now define the notion of residue which is central in incidence geometry.
Let $\Gamma:= (X, *, t, I)$ be an {\it incidence geometry} and $F$ a flag of $\Gamma$.
The {\em residue} of $F$ in $\Gamma$ is the incidence geometry $\Gamma_F := (X_F, *_F, t_F, I_F)$ where
\begin{itemize}
\item $X_F := \{ x \in X : x * F, x \not\in F\}$;
\item $I_F := I \setminus t(F)$;
\item $t_F$ and $*_F$ are the restrictions of $t$ and $*$ to $X_F$ and $I_F$.
\end{itemize}

An incidence system $\Gamma$ is {\em residually connected} when each residue of rank at least two of $\Gamma$ has a connected incidence graph. It is called {\it thin} (resp. {\it firm}) when every residue of rank one of $\Gamma$ contains exactly two (resp. at least two) elements.

An incidence system $\Gamma:= (X, *, t, I)$ is {\em chamber-connected} when for each pair of chambers $C$ and $C'$, there exists a sequence of successive chambers $C =: C_0,$ $C_1, \ldots, C_n := C'$ such that $\mid C_i\cap C_{i+1} \mid = \mid I\mid -1$.
An incidence system $\Gamma:= (X, *, t, I)$ is {\em strongly chamber-connected} when all its residues of rank at least 2 are chamber-connected.

We now state the following proposition which is a generalisation of Proposition 2A1 of~\cite{ARP}.
\begin{proposition}\label{rcsc}
Let $\Gamma$ be a firm incidence geometry. Then $\Gamma$ is residually connected if and only if $\Gamma$ is strongly chamber-connected.
\end{proposition}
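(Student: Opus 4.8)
The plan is to prove the equivalence by induction on the rank $n = |I|$, exploiting the fact that residues of residues are again residues: for a nonempty flag $F$ and any $x \in F$ one has $\Gamma_F = (\Gamma_{\{x\}})_{F \setminus \{x\}}$, and a residue of a firm geometry is again a firm geometry. Hence both properties decompose cleanly. Namely, $\Gamma$ is residually connected if and only if (when $n \ge 2$) its incidence graph is connected and, for every element $x$, the residue $\Gamma_{\{x\}}$ is residually connected; and likewise $\Gamma$ is strongly chamber-connected if and only if (when $n \ge 2$) $\Gamma$ is chamber-connected and every $\Gamma_{\{x\}}$ is strongly chamber-connected. Since each $\Gamma_{\{x\}}$ is a firm geometry of rank $n-1$, the inductive hypothesis identifies the two properties on all these residues, and the proposition reduces to a single claim: \emph{assuming every $\Gamma_{\{x\}}$ is chamber-connected, $\Gamma$ has connected incidence graph if and only if $\Gamma$ is chamber-connected}. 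The base case $n \le 1$ is vacuous, as there are then no residues of rank $\ge 2$.

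One implication of the reduced claim is easy. If $\Gamma$ is chamber-connected, take two elements $x, y$; each lies in a chamber, say $x \in C$ and $y \in C'$, and a gallery $C = C_0, \dots, C_m = C'$ of successive chambers joins them. The elements of a single chamber are pairwise incident, hence span a connected subgraph of the incidence graph, and consecutive chambers $C_i, C_{i+1}$ overlap in $|I|-1 \ge 1$ elements; concatenating these cliques gives a walk from $x$ to $y$, so the incidence graph is connected. This direction uses neither the induction nor firmness.

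The substance, and where I expect the main difficulty, is the converse. Assume $\Gamma$ is connected and fix a chamber $C$; let $\mathcal{R}$ be the set of chambers reachable from $C$ by a gallery of successive chambers, and let $X_0$ be the set of elements lying in some chamber of $\mathcal{R}$. The key step is to show $X_0$ is closed under incidence: if $x \in X_0$ and $x * y$ with $y \ne x$ (so $t(y) \ne t(x)$ by the incidence axiom), then $y \in X_0$. Indeed, choose $D \in \mathcal{R}$ with $x \in D$; then $D \setminus \{x\}$ is a chamber of $\Gamma_{\{x\}}$, while $y$ lies in some chamber $E'$ of $\Gamma_{\{x\}}$ because every flag of a geometry extends to a chamber. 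As $\Gamma_{\{x\}}$ is chamber-connected, a gallery in $\Gamma_{\{x\}}$ joins $D \setminus \{x\}$ to $E'$; restoring $x$ to every chamber of this gallery turns it into a gallery of $\Gamma$ (successive chambers now sharing their common $|I|-2$ elements together with $x$, i.e. $|I|-1$ elements), all lying in $\mathcal{R}$ since $D \in \mathcal{R}$. Hence $E := E' \cup \{x\} \in \mathcal{R}$ and $y \in X_0$. Consequently no edge of the incidence graph leaves $X_0$, so $X_0$ is a union of connected components; since $C \subseteq X_0 \ne \emptyset$ and $\Gamma$ is connected, $X_0 = X$.

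Finally I would deduce chamber-connectedness from $X_0 = X$. Given an arbitrary chamber $C'$, pick $x \in C'$; as $x \in X_0$ there is $D \in \mathcal{R}$ containing $x$, and both $C' \setminus \{x\}$ and $D \setminus \{x\}$ are chambers of the chamber-connected residue $\Gamma_{\{x\}}$. Lifting a gallery between them as above produces a gallery of $\Gamma$ from $D$ to $C'$, so $C' \in \mathcal{R}$. Thus every chamber is reachable from $C$, $\Gamma$ is chamber-connected, and the induction closes. The only points needing care are the bookkeeping that residues of firm geometries remain firm, so that the inductive hypothesis genuinely applies to each $\Gamma_{\{x\}}$, and the repeated lifting of galleries from $\Gamma_{\{x\}}$ up to $\Gamma$.
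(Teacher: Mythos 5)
Your proof is correct, and at the mechanical level it uses the same two devices as the paper's own proof: induction on the rank, and the lifting of galleries between $\Gamma$ and the element residues $\Gamma_{\{x\}}$ (restore $x$ to every chamber of a gallery of $\Gamma_{\{x\}}$); the easy direction is identical in both. The genuine difference lies in where chamber-connectivity of the residues comes from, and here your bookkeeping is not just different but necessary. The paper reduces the proposition to the standalone claim that a firm geometry with connected incidence graph is chamber-connected, and inside the induction it asserts that the residue $\Gamma_{x_1}$ is connected, ``otherwise there is a pair of elements incident to $x_1$ in different components of the incidence graph of $\Gamma$, implying that the geometry is not firm.'' That inference is invalid: all elements incident to $x_1$ lie in one component of the incidence graph of $\Gamma$ (they are joined through $x_1$), so a disconnected residue contradicts neither connectedness nor firmness. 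Indeed the standalone claim is false. Take $I=\{0,1,2\}$, elements $a,a'$ of type $0$ incident to everything, $b_1,\dots,b_4$ of type $1$, $c_1,\dots,c_4$ of type $2$, and $b_i*c_j$ if and only if $i,j\in\{1,2\}$ or $i,j\in\{3,4\}$. This is a firm geometry with connected incidence graph, yet the component (in the $\{1,2\}$-truncation) of the unique incident pair $\{b_i,c_j\}$ contained in a chamber is unchanged when passing to an adjacent chamber, so no gallery joins $\{a,b_1,c_1\}$ to $\{a,b_3,c_3\}$; accordingly the residues of $a$ and $a'$ are disconnected even though $\Gamma$ is firm and connected.

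Your decomposition threads the hypothesis correctly: residual connectedness (resp.\ strong chamber-connectedness) of $\Gamma$ is equivalent to connectedness (resp.\ chamber-connectedness) of $\Gamma$ together with the same property for every $\Gamma_{\{x\}}$; the induction identifies the two properties on the rank-$(n-1)$ residues; and the only implication then needed is the one you prove, namely that if $\Gamma$ is connected and every $\Gamma_{\{x\}}$ is chamber-connected then $\Gamma$ is chamber-connected, via closure of the set $X_0$ under incidence. (Your reachable-set argument also subsumes the paper's case split $C\cap C'\neq\emptyset$ versus $C\cap C'=\emptyset$ in one stroke; the only point worth adding is the trivial remark that for $n=2$ the rank-one residues are vacuously chamber-connected, so the reduced claim applies there as well.) In short, your route proves exactly the implication the induction can support, whereas the paper's route, as literally written, passes through an intermediate statement that is too strong; your version is the correct way to carry the generalisation of Proposition 2A1 of McMullen and Schulte through the induction.
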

\begin{proof}
Observe that it is enough to prove that $\Gamma$ has a  connected incidence graph if and only if $\Gamma$ is chamber-connected, as this can then be applied to every residue.

Suppose first that the incidence graph of $\Gamma$ is connected.
As in~\cite{ARP}, the proof proceeds by induction on $n := rank(\Gamma)$.
%, there being nothing to prove if $n\leq 1$.
Let $n=2$. Given a pair of chambers  $C_1$ and  $C_k$ there is a sequence of incident elements, having alternate types, whose first two elements are the elements of $C_1$ and the last two are the elements of $C_2$. Pairs of consecutive elements in this sequence give a sequence of chambers needed to give chamber connectedness.
Let $n> 2$. Consider two chambers, $C$ and $C'$, of $\Gamma$.

If $C\cap C' \neq \emptyset$, say $C\cap C' = \{x_1, \ldots, x_k\}$ with $k\geq 1$, then the residue $\Gamma_{x_1}$ contains $C\setminus\{x_1\}$ and $C'\setminus\{x_1\}$ as chambers. Moreover,  $\Gamma_{x_1}$ has a connected incidence graph,  otherwise there is a pair of elements incident to $x_1$ in different components of the incidence graph of $\Gamma$, implying that the geometry in not firm, a contradiction. Hence, by induction, we can find a sequence of successive chambers $C\setminus\{x_1\} =: C_0, C_1, \ldots, C_n := C'\setminus\{x_1\}$ in $\Gamma_{x_1}$ such that $\mid C_i\cap C_{i+1} \mid = n -2$.
The sequence $C =:C_0\cup\{x_1\}, C_1\cup\{x_1\}, \ldots, C_n\cup\{x_1\} := C'$ is then such that $\mid (C_i\cup \{x_1\}) \cap (C_{i+1}\cup \{x_1\}) \mid = n-1$ as needed.

Let $C\cap C' = \emptyset$.
Since $\Gamma$ has  a connected incidence graph, we can find a sequence of successively incident elements $x_0, \ldots, x_k$ such that $x_0 \in C$ and $x_k \in C'$.
For each $i = 1, \ldots, k$, there is a chamber $C_i \supseteq \{x_{i-1}, x_i\}$. Set $C_0 := C$ and $C_k := C'$.
We now appeal to the first part of the proof to move $C_{i-1}$ to $C_i$ by a sequence of adjacent chambers containing $x_i$ for each $i=1,\ldots, k$; concatenation then gives the required sequence from $C$ to $C'$.

Suppose now that $\Gamma$ is chamber-connected.
Connectedness is obvious as every flag is contained in a chamber. Indeed, let $x_1$ and $x_2$ be two elements of $\Gamma$. Each of them is contained in at least one chamber. Let $x_1\in C_1$ and $x_2\in C_2$ where $C_1$ and $C_2$ are chambers of $\Gamma$, we easily get a path from $x_1$ to $x_2$ using the sequence of successive chambers connecting $C_1$ to $C_2$.
Hence $\Gamma$ has a connected incidence graph.
\end{proof}

A {\em weak hypertope} is a thin incidence geometry.
A {\em hypertope} is a weak hypertope which is strongly chamber connected or equivalently residually connected.

Let $\Gamma:=(X,*, t,I)$ be an incidence system.
An {\em automorphism} of $\Gamma$ is a mapping
$\alpha:(X,I)\rightarrow (X,I):(x,t(x)) \mapsto (\alpha(x),t(\alpha(x)))$
where
\begin{itemize}
\item $\alpha$ is a bijection on $X$;
\item for each $x$, $y\in X$, $x*y$ if and only if $\alpha(x)*\alpha(y)$;
\item for each $x$, $y\in X$, $t(x)=t(y)$ if and only if $t(\alpha(x))=t(\alpha(y))$.
\end{itemize}
Note that $\alpha$ induces a bijection on $I$.

An automorphism $\alpha$ of $\Gamma$ is called {\it type preserving} when for each $x\in X$, $t(\alpha(x))=t(x)$ (i.e. $\alpha$ maps each element on an element of the same type).

The set of type-preserving automorphisms of $\Gamma$ is a group denoted by $Aut_I(\Gamma)$.
The set of automorphisms of $\Gamma$ is a group denoted by $Aut(\Gamma)$.
A {\em correlation} is a non-type-preserving automorphism, that is an element of $Aut(\Gamma)\setminus Aut_I(\Gamma)$.

An incidence geometry $\Gamma$ is {\em flag-transitive} if $Aut_I(\Gamma)$ is transitive on all flags of a given type $J$ for each type $J \subseteq I$.
An incidence geometry $\Gamma$ is {\em chamber-transitive} if $Aut_I(\Gamma)$ is transitive on all chambers of $\Gamma$.
Finally, an incidence geometry $\Gamma$ is {\em regular} if $Aut_I(\Gamma)$ acts regularly on the chambers (i.e. the action is semi-regular and transitive).
The following proposition is folklore in incidence geometry. We recall it here as our paper makes bridges between different subjects.
\begin{proposition}
Let $\Gamma$ be an incidence geometry. $\Gamma$ is chamber-transitive if and only if $\Gamma$ is flag-transitive.
\end{proposition}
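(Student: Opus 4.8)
The plan is to prove the two implications separately, the forward one being immediate and the converse being the one that actually uses the geometry hypothesis.

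First I would dispose of the easy direction, that flag-transitivity implies chamber-transitivity. Since a chamber is by definition a flag of type $I$, and flag-transitivity asserts transitivity of $Aut_I(\Gamma)$ on flags of type $J$ for \emph{every} $J \subseteq I$, specialising to $J = I$ yields transitivity on chambers at once.

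For the converse, suppose $Aut_I(\Gamma)$ is chamber-transitive and fix a type $J \subseteq I$ together with two flags $F, F'$ of type $J$. Because $\Gamma$ is an incidence geometry, each flag is contained in a chamber, so I choose chambers $C \supseteq F$ and $C' \supseteq F'$. By chamber-transitivity there is $\alpha \in Aut_I(\Gamma)$ with $\alpha(C) = C'$, and the crucial claim is that this same $\alpha$ carries $F$ onto $F'$. To establish it I would first record the elementary fact that a flag contains at most one element of each type: if $x, y$ lie in a common flag then $x * y$, and if in addition $t(x) = t(y)$ then the defining incidence axiom forces $x = y$. Consequently a chamber, being of type $I$, contains exactly one element of each type, so the collection of elements of $C'$ whose type lies in $J$ is the \emph{unique} flag of type $J$ contained in $C'$; that flag is precisely $F'$. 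Since $\alpha$ is type-preserving, $\alpha(F)$ is again a flag of type $J$, and it lies in $\alpha(C) = C'$; by the uniqueness just noted, $\alpha(F) = F'$, which is exactly what flag-transitivity requires.

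I do not expect a serious obstacle. The single delicate point is the uniqueness of the type-$J$ subflag inside a chamber, which is where the incidence axiom ``$x*y$ and $t(x)=t(y)$ imply $x=y$'' is indispensable: it is what guarantees that a type-preserving map sending $C$ to $C'$ automatically matches up their respective type-$J$ subflags. Everything else is a direct unwinding of the definitions of chamber, flag, and type.
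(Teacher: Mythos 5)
Your proof is correct and follows essentially the same route as the paper: extend the two flags of type $J$ to chambers, apply chamber-transitivity, and use type-preservation to conclude the automorphism matches up the flags. The only difference is that you spell out explicitly the uniqueness of the type-$J$ subflag of a chamber (via the axiom that $x*y$ and $t(x)=t(y)$ imply $x=y$), a step the paper leaves implicit in the phrase ``as $g$ preserves the types of the elements of $\Gamma$, we have $g(F_1)=F_2$.''
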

\begin{proof}
It is obvious that if $\Gamma$ is flag-transitive, then $\Gamma$ is chamber-transitive.
Suppose $\Gamma$ is chamber-transitive.
Let $F_1$ and $F_2$ be two flags of the same type. Each of them is contained in at least one chamber as $\Gamma$ is a geometry. Let $C_i$ be a chamber containing $F_i$ ($i=1,2$).
Since $\Gamma$ is chamber-transitive, there exists an element $g\in Aut_I(\Gamma)$ such that $g(C_1) = C_2$. In particular, as $g$ preserves the types of the elements of $\Gamma$, we have $g(F_1) = F_2$.
\end{proof}

A {\em regular weak hypertope} is a flag-transitive weak hypertope.
A {\em regular hypertope} is a flag-transitive hypertope.
We use the adjective ``regular" here as, if $\Gamma$ is a weak hypertope, it is thin and hence the action of $Aut_I(\Gamma)$ is necessarily free.
Indeed, if  $g\in Aut_I(\Gamma)$ fixes a chamber $C$ then, as there is exactly one chamber $C^i$ differing from $C$ in the $i$-element, for each $i\in I$,  $g$ also fixes $C^i$.  As the geometry is residually connected $g$ must be the identity.

We can also extend the notion of chirality in abstract polytopes to the more general framework of incidence geometries.
Although thinness is not necessary in order to define what is a regular geometry, it is needed to define chiral geometries.

Two chambers $C$ and $C'$ of an incidence geometry of rank $r$ are called {\it $i$-adjacent} if $C$ and $C'$ differ only in their $i$-elements. We then denote $C'$ by $C^i$.
Let $\Gamma(X,*,t,I)$ be a thin incidence geometry.
We say that $\Gamma$ is {\it chiral} if
$Aut_I(\Gamma)$ has two orbits on the chambers of $\Gamma$ such that any two adjacent chambers lie in distinct orbits.
Moreover, if $\Gamma$ is residually connected, we call $\Gamma$ a {\it chiral hypertope}.

When $\Gamma$ is a chiral hypertope, if $Aut(\Gamma) \neq Aut_I(\Gamma)$, correlations may either interchange the two orbits or preserve them. A correlation that interchanges the two orbits is said to be {\em improper} and a correlation that preserves them is said to be {\em proper}. 
Correlations, in the case of polytopes, are called dualities in~\cite{HW2005}.
In that paper, it is shown that if a chiral polytope has a duality of one kind, all its dualities will be of the same kind (see~\cite[Lemma 3.1]{HW2005}). This result does not extend to chiral hypertopes. 
%If all correlations of a hypertope are proper (resp. improper), we say that the hypertope is {\em properly (resp. improperly) self-correlated}.

A rank one hypertope is a geometry with two elements. The polygons are precisely the hypertopes of rank two. Abstract regular polytopes are regular hypertopes. More details about this correspondence will be given in Section~\ref{section5}. In rank three and higher there are (regular) hypertopes that are not abstract (regular) polytopes. More generally there are examples of regular geometries that are not thin (see for instance geometry number 2 of $Sym(3)$ in~\cite{BDL96b}).
\section{Regular hypertopes as coset geometries}\label{section3}

Given an incidence system $\Gamma$ and a chamber $C$ of $\Gamma$, we may associate to the pair $(\Gamma,C)$ a pair consisting of a group $G$ and a set $\{G_i : i \in I\}$ of subgroups of $G$ where $G := Aut_I(\Gamma)$ and $G_i$ is the stabilizer in $G$ of the element of type $i$ in $C$.
The following proposition shows how to reverse this construction, that is starting from a group and some of its subgroups, construct an incidence system.
\begin{proposition}(Tits, 1956)~\cite{Tits61}\label{tits}
Let $n$ be a positive integer
and $I:= \{1,\ldots ,n\}$.
Let $G$ be a group together with a family of subgroups ($G_i$)$_{i \in I}$, $X$ the set consisting of all cosets $G_ig$ with $g \in G$ and $i \in I$, and $t : X \rightarrow I$ defined by $t(G_ig) = i$.
Define an incidence relation $*$ on $X\times X$ by :
\begin{center}
$G_ig_1 * G_jg_2$ iff $G_ig_1 \cap G_jg_2 \neq \emptyset$.
\end{center}
Then the 4-tuple $\Gamma := (X, *, t, I)$ is an incidence system having a chamber.
Moreover, the group $G$ acts by right multiplication as an automorphism group on $\Gamma$.
Finally, the group $G$ is transitive on the flags of rank less than 3.
\end{proposition}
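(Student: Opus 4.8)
The plan is to verify the four assertions in turn, the bulk of the work being routine coset arithmetic together with one genuinely useful observation.

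First, to see that $\Gamma$ is an incidence system I would check the three defining properties of the relation $*$. Reflexivity is immediate since any coset $G_ig$ is nonempty and hence meets itself, and symmetry is built into the definition because intersection is symmetric. The only substantive point is the condition that $x*y$ together with $t(x)=t(y)$ forces $x=y$: if $G_ig_1*G_ig_2$, then $G_ig_1\cap G_ig_2\neq\emptyset$, and since two right cosets of the same subgroup are either equal or disjoint, we get $G_ig_1=G_ig_2$. I would also remark that $t$ is well-defined once each coset is understood to carry the index of the subgroup it is a coset of, so that elements of type $i$ and of type $j$ stay formally distinct even when $G_i=G_j$.

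Second, for the existence of a chamber I would simply exhibit one, namely $\{G_1e,\dots,G_ne\}$ where $e$ is the identity. Each $G_ie=G_i$ contains $e$, so any two of these cosets meet at $e$ and are pairwise incident; since their types are $1,\dots,n$, this is a flag of type $I$, i.e.\ a chamber. Third, for the action I would send, for $g\in G$, the coset $G_ix$ to $G_i(xg)$. This is well-defined (if $G_ix=G_ix'$, then $xx'^{-1}\in G_i$, whence $(xg)(x'g)^{-1}=xx'^{-1}\in G_i$), it is type-preserving by construction, and it preserves incidence because right multiplication by $g$ is a bijection of $G$ commuting with intersection, so $(G_ix_1\cap G_jx_2)g=G_i(x_1g)\cap G_j(x_2g)$. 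Associativity and the identity acting trivially then show that these maps assemble into a homomorphism $G\to Aut_I(\Gamma)$.

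Finally, and this is the step I expect to carry the real content, I would prove transitivity on flags of rank at most two. Rank one is easy: right multiplication by $g_1^{-1}g_2$ sends $G_ig_1$ to $G_ig_2$. For rank two the key observation is that an incident pair can be represented through a common element: if $\{G_ig_1,G_jg_2\}$ is a flag of type $\{i,j\}$, choose $a\in G_ig_1\cap G_jg_2$, so that $G_ig_1=G_ia$ and $G_jg_2=G_ja$; representing a second such flag through some $a'$ as $\{G_ia',G_ja'\}$, right multiplication by $a^{-1}a'$ carries $G_ia\mapsto G_ia'$ and $G_ja\mapsto G_ja'$ simultaneously. The main obstacle is precisely this simultaneous representation through a single group element, which is exactly what can fail for three or more pairwise incident cosets; this is why the statement is restricted to rank less than three.
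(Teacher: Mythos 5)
Your proposal is correct, but there is nothing in the paper to compare it against: the paper states this proposition without proof, attributing it to Tits and citing~\cite{Tits61}. Your argument is the standard one and is complete: the coset-disjointness argument for the axiom that incident elements of equal type coincide, the chamber $\{G_1e,\dots,G_ne\}$, the verification that right multiplication is well defined, type-preserving and incidence-preserving, and, for rank-two transitivity, the key trick of re-representing an incident pair through a common element $a\in G_ig_1\cap G_jg_2$. Two of your remarks deserve emphasis. First, tagging each coset with the index of its subgroup (so that $X$ is really a set of pairs $(i,G_ig)$) is needed for $t$ to be well defined when $G_i=G_j$ for $i\neq j$; this is usually left implicit. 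Second, your closing observation about why the argument stops at rank two is exactly right, and it is not incidental: the failure of a simultaneous representative for three pairwise incident cosets is precisely condition (3) of Lemma~\ref{titsft}, and controlling it is the content of the flag-transitivity criterion of Theorem~\ref{hermandft}; this is why the paper must impose flag-transitivity as a hypothesis in later results rather than getting it for free from the construction.
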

In particular, we want to find under which conditions the construction above produces incidence geometries that are regular hypertopes.

%Let $G$ be a group together with a finite family of subgroups ($G_i$)$_{i \in I}$.
%We define the {\it pre-geometry} $\Gamma = \Gamma(G,(G_i)_{i \in I})$ as follows.
%The set $X$ of {\it elements} of $\Gamma$ consists of all cosets $gG_i$, $g \in G$, $i \in I$.
%We define an {\it incidence relation} * on $X$ by :
%\begin{center}
%$g_1G_i$ * $g_2G_j$ iff $g_1G_i \cap g_2G_j$ is non-empty in $G$.
%\end{center}
%The {\it type function} $t$ on $\Gamma$ is defined by $t(gG_i) = i$.
%The {\it type} of a subset $Y$ of $X$ is the set $t(Y)$;
%its {\it rank} is the cardinality of $t(Y)$ and we call $\mid I \mid$ the {\it rank} of $\Gamma$.
Observe that, in the proposition above, $G \leq Aut_I(\Gamma)$.
When a geometry $\Gamma$ is constructed using the proposition above, we denote it by $\Gamma(G; (G_i)_{i\in I})$ and call it a {\em coset geometry}.
The subgroups $(G_i)_{i\in I}$ are called the {\it maximal parabolic subgroups}.
The {\it Borel subgroup} of the incidence system is the subgroup $B = \cap_{i \in I} G_i$.
%A {\it flag} is a set of pairwise incident elements of $X$ and a {\it chamber} of $\Gamma$ is a flag of type $I$.
%An element of type $i$ is also called an {\it i-element}.\\
%The group $G$ acts on $\Gamma$ as an automorphism group, by left translation, preserving the type of each element.\\
The action of $G$ on $\Gamma$ involves a {\it kernel} $K$ which is the largest normal subgroup of $G$ contained in every $G_i$, $i \in I$. 
The kernel is the identity if and only if  $G$ acts faithfully on $\Gamma$.
If $G$ acts transitively on all chambers of $\Gamma$, hence also on
all flags of any type $J$, where $J$ is a subset of $I$, we say that $G$ is {\em flag-transitive} on $\Gamma$
or that $\Gamma$ is {\em flag-transitive (under the action of $G$)}.
In that case, any chamber of $\Gamma$ is obtained by multiplying the cosets of the {\em base chamber} $\{G_0, \ldots, G_{r-1}\}$ on the right by an element $g\in G$.
If $G$ acts {\it regularly} on $\Gamma$ (i.e. 
the action is free and flag-transitive) we say that $\Gamma$ is {\em regular} (under the action of $G$).
% and we say that $\Gamma$ is {\em regular}.

If $\Gamma=\Gamma(G; (G_i)_{i\in I})$ is a flag-transitive geometry and $F$ is a flag of $\Gamma$,
the {\it residue} of $F$ is isomorphic to the incidence system
\begin{center}
$\Gamma_F = \Gamma(\cap_{j \in t(F)} G_j,(G_i \cap(\cap_{j \in t(F)}G_j))_{i \in I \backslash t(F)})$
\end{center}
and we readily see that $\Gamma_F$ is also a flag-transitive geometry.\\

If $\Gamma$ is a geometry of rank $2$ with $I = \{i, j\}$ such that each of its $i$-elements
is incident with each of its $j$-elements, then we call $\Gamma$ a {\it generalized digon}.

We refer to~\cite{Buek79} and \cite{Buek83} for the notion of the Buekenhout diagram of a geometry and simplify it here to deal only with thin geometries.
For a thin, residually connected, flag-transitive
coset geometry $\Gamma(G; (G_i)_{i\in I})$, the {\em Buekenhout diagram} $\mathcal{B}(\Gamma)$ is a graph whose vertices
are the elements of $I$.
Elements $i$, $j$ of $I$ are not joined by an edge of the diagram provided that a residue $\Gamma_F$
of type $\{i, j\}$ is a generalized digon.
Otherwise, $i$ and $j$ are joined by an edge. This edge is endowed with a number $g_{ij}$ that is equal to
half the girth of the incidence graph of a residue $\Gamma_F$ of type $\{ i, j \}$ provided that $g_{ij}>3$.
%On a picture of the diagram, this structure can locally be depicted as follows.
%\begin{center}
%\begin{picture}(200,20)
%\put(30,5){\circle{5}}
%\put(115,5){\circle{5}}
%\put(33,5){\line(1,0){79}}
% diametres et gonalite
%\put(70,15){$g_{ij}$}
%\put(27,5){\line(-1,0){10}}
%\put(27,5){\line(-1,-1){10}}
%\put(27,5){\line(-1,1){10}}
%\put(118,5){\line(1,0){10}}
%\put(118,5){\line(1,-1){10}}
%\put(30,10){$i$}
%\put(110,10){$j$}
%\end{picture}
%\end{center}

For example consider the cube as an incidence geometry of rank three with elements being the vertices, edges and faces, type set being $\{0,1,2\}$, vertices (resp. edges, faces) being of type 0 (resp. 1, 2) and incidence being symmetrised inclusion. The vertices of the Buekenhout diagram correspond to the types $0,\, 1$ and $2$. The residue of an edge, an element of type 1,  is a generalized digon,  the residue of a vertex, an element of type 0, is a triangle and the residues of a face is a square. Thus $g_{12}=3$, $g_{01}=4$ and the diagram is linear as in the following figure.

\begin{center}
\begin{picture}(120,40)
\put(0,15){\circle{5}}
\put(60,15){\circle{5}}
\put(120,15){\circle{5}}
\put(30,3){4}
\put(-3,25){0}
\put(57,25){1}
\put(117,25){2}
\put(3,15){\line(1,0){54}}
\put(63,15){\line(1,0){54}}
% diametres et gonalite
\end{picture}
\end{center}

In the case of regular polytopes the Buekenhout diagram and the Coxeter diagram are essentially the same.

In order for a coset geometry $\Gamma = \Gamma(G;(G_i)_{i\in I})$ to be a regular hypertope, it needs in particular to be residually connected.
The following Theorem translates the residual connectedness (or strong flag-connectedness) condition into a group-theoretic condition.
\begin{theorem}\cite{Dehon94}\label{RCGroupal}
Let $\Gamma = \Gamma(G;(G_i)_{i\in I})$ be a flag-transitive coset geometry.
$\Gamma$ is residually connected if and only if for every subset $J\subseteq I$ of cardinality at most $|I|-2$,
\[\cap_{j\in J} G_j = \langle \cap_{j \in J\cup \{k\}} G_j : k \in I-J\rangle.\]
\end{theorem}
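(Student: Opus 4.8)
The plan is to isolate a single statement about connectedness of a coset geometry and then apply it to every residue. Concretely, I would first prove the following key lemma: for a flag-transitive coset geometry $\Gamma(H;(H_k)_{k\in K})$ of rank at least two, the incidence graph is connected if and only if $H=\langle H_k : k\in K\rangle$. Granting this, the theorem follows by observing that residual connectedness is, by definition, the connectedness of every residue of rank at least two, and that (by flag-transitivity, using the residue formula recalled above) the residue of a flag $F$ of type $J$ is the coset geometry $\Gamma(\cap_{j\in J}G_j,(\cap_{j\in J\cup\{k\}}G_j)_{k\in I\setminus J})$, which has rank $|I|-|J|\geq 2$ exactly when $|J|\leq|I|-2$. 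The lemma then rewrites the connectedness of this residue as the identity $\cap_{j\in J}G_j=\langle\cap_{j\in J\cup\{k\}}G_j : k\in I\setminus J\rangle$, and since all flags of a fixed type $J$ lie in one $G$-orbit their residues are isomorphic, so checking one representative per $J$ suffices.

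For the lemma itself, I would set $L:=\langle H_k : k\in K\rangle$ and track the connected component $\mathcal{C}_0$ of the base chamber $\{H_k : k\in K\}$, whose vertices are pairwise incident (they all contain the identity) and hence lie in one component. The forward containment is that $L$ stabilises $\mathcal{C}_0$: each element $h\in H_k$ fixes the vertex $H_k$, and a graph automorphism fixing a vertex of $\mathcal{C}_0$ must map $\mathcal{C}_0$ to itself; since the $H_k$ generate $L$, all of $L$ preserves $\mathcal{C}_0$, giving $\{H_k\ell : k\in K,\ \ell\in L\}\subseteq\mathcal{C}_0$. For the reverse containment I would argue by induction on path length that neighbours stay inside this set: if $H_i\ell$ is a vertex with $\ell\in L$ and $H_j g$ is incident to it, then $H_i\ell\cap H_jg\neq\emptyset$ forces $g\in H_jH_i\ell\subseteq L$, so the neighbour is again of the form $H_j\ell'$ with $\ell'\in L$. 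Hence $\mathcal{C}_0=\{H_k\ell : k\in K,\ \ell\in L\}$. Finally a vertex $H_ig$ lies in $\mathcal{C}_0$ if and only if $g\in H_iL=L$, so $\mathcal{C}_0$ exhausts the vertex set precisely when $H=L$, which is the claimed equivalence.

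The step I expect to be the main obstacle is the reverse containment $\mathcal{C}_0\subseteq\{H_k\ell : k\in K,\ \ell\in L\}$, i.e. showing that no path can leave the $L$-orbit of the base chamber; here the incidence-by-intersection description of edges must be turned into the coset computation $g\in H_jH_i\ell$, and one must make sure the argument is uniform across all types $k\in K$ so that the single generated subgroup $L$ controls the whole component. The remaining points---flag-transitivity supplying a canonical representative for each type $J$, the residue having rank at least two exactly for $|J|\leq |I|-2$, and the isomorphism of residues of flags of the same type---are bookkeeping that follow from the material already recorded in this section.
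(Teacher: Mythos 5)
Your proposal is correct. Note that the paper does not actually prove this statement: it is quoted from Dehon's paper \cite{Dehon94}, so there is no internal proof to compare against. Your argument is, however, exactly the standard one underlying the cited result: reduce everything to the single equivalence ``a coset incidence system $\Gamma(H;(H_k)_{k\in K})$ has connected incidence graph if and only if $H=\langle H_k : k\in K\rangle$'', and then transport it to each residue using the flag-transitive residue formula $\Gamma_F\cong\Gamma\bigl(\cap_{j\in J}G_j,(\cap_{j\in J\cup\{k\}}G_j)_{k\in I\setminus J}\bigr)$ recorded in Section~3, with flag-transitivity guaranteeing that one representative flag per type $J$ suffices and that rank-$\geq 2$ residues correspond exactly to $|J|\leq |I|-2$. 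Your proof of the key lemma is sound in both directions: right multiplication by elements of each $H_k$ fixes the vertex $H_k$ and hence stabilises the component of the base chamber, giving $\{H_k\ell : \ell\in L\}\subseteq\mathcal{C}_0$; and the coset computation $x=h_i\ell=h_jg\Rightarrow g=h_j^{-1}h_i\ell\in L$ shows no edge leaves that set, so $\mathcal{C}_0$ is precisely the $L$-orbit of the base chamber and connectedness is equivalent to $H=L$. (As a minor remark, this lemma needs neither flag-transitivity nor the geometry property; those hypotheses are only used in the reduction step, where they are genuinely required.)
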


The following theorem, due to Jacques Tits, shows that if a coset geometry $\Gamma(G;(G_i)_{i\in I})$ has a non-trivial kernel $K$ then this geometry can be constructed from a smaller group, namely $G/K$. 
%Indeed every coset geometry can be constructed from a group that acts faithfully on the geometry.

\begin{theorem}\cite{Tits61}
Let $\Gamma(G;(G_i)_{i\in I})$ be a coset geometry. If $K$ is the kernel of the action of $G$ on $\Gamma$, then  $\Gamma(G;(G_i)_{i\in I}) \cong \Gamma(G/K;(G_i/K)_{i\in I})$.
\end{theorem}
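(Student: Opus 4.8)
The plan is to exhibit an explicit type-preserving bijection between the two coset geometries and to verify that it preserves incidence in both directions. Write $\bar{G} := G/K$ and let $\pi : G \to \bar{G}$ denote the canonical projection. Since $K$ is the kernel of the action, $K$ is normal in $G$ and contained in every $G_i$; hence each $G_i/K$ is a well-defined subgroup of $\bar{G}$ (its preimage under $\pi$ is exactly $G_i$), and $\Gamma(\bar{G};(G_i/K)_{i\in I})$ is itself an incidence system by Proposition~\ref{tits}. I would then define
\[
\phi : G_i g \longmapsto (G_i/K)(gK),
\]
sending an element of type $i$ in $\Gamma(G;(G_i)_{i\in I})$ to an element of the same type in $\Gamma(\bar{G};(G_i/K)_{i\in I})$.

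First I would check that $\phi$ is well defined and type-preserving: if $G_i g = G_i g'$ then $g' = hg$ with $h \in G_i$, so $g'K = (hK)(gK)$ with $hK \in G_i/K$, giving $(G_i/K)(g'K) = (G_i/K)(gK)$; and the type $i$ is manifestly preserved. Injectivity follows by reversing this computation, using $K \le G_i$ to absorb any kernel element back into $G_i$, and surjectivity is immediate since every coset $\bar{g} \in \bar{G}$ lifts to some $g \in G$. Thus $\phi$ is a type-preserving bijection on the element sets.

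It then remains to show that $G_i g_1 * G_j g_2$ if and only if $\phi(G_i g_1) * \phi(G_j g_2)$, that is, $G_i g_1 \cap G_j g_2 \neq \emptyset$ if and only if $(G_i/K)(g_1K) \cap (G_j/K)(g_2K) \neq \emptyset$. The forward implication is routine: any $x$ lying in the left intersection has image $\pi(x)$ lying in the right one. The converse is where the argument has genuine content, and it is the only place normality of $K$ is truly needed. Suppose $h_1 g_1 K = h_2 g_2 K$ with $h_1 \in G_i$ and $h_2 \in G_j$, so that $h_1 g_1 = h_2 g_2 k$ for some $k \in K$. I would invoke normality of $K$ to rewrite $g_2 k = k' g_2$ with $k' \in K$, whence $h_1 g_1 = (h_2 k') g_2$ with $h_2 k' \in G_j$ (because $K \le G_j$). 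Then $h_1 g_1 \in G_i g_1 \cap G_j g_2$, so the left intersection is nonempty.

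Putting these together shows that $\phi$ is a type-preserving bijection preserving incidence in both directions, hence an isomorphism of incidence systems, which is the claimed $\Gamma(G;(G_i)_{i\in I}) \cong \Gamma(\bar{G};(G_i/K)_{i\in I})$. I expect the main obstacle to be the converse incidence implication: one must resist the temptation to conclude directly from $h_1 g_1 K = h_2 g_2 K$ and instead exploit normality to shift the stray kernel element $k$ to the left of $g_2$, so that it can be absorbed into the subgroup $G_j$ rather than left dangling on the coset representative.
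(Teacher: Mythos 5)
Your proof is correct. Note that the paper states this result without proof, simply citing Tits (1961), so there is no in-paper argument to compare against; your explicit map $\phi : G_i g \mapsto (G_i/K)(gK)$, with well-definedness and injectivity resting on $K \le G_i$ and the backward incidence implication resting on normality of $K$, is the standard argument and is exactly what is needed. As a small remark, the converse incidence step can be packaged even more cleanly by observing that $\pi^{-1}\bigl((G_i/K)(gK)\bigr) = G_i g K = G_i K g' $ for a suitable representative, which equals $G_i g$ since $K$ is normal and $K \le G_i$; then nonempty intersection downstairs pulls back to nonempty intersection upstairs because a preimage of a nonempty set under a surjection is nonempty, which is the same use of normality you make, just stated set-wise rather than element-wise.
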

For this reason it is natural to assume that a thin flag-transitive coset geometry is regular.
The following lemma shows that we then may assume the Borel subgroup of a hypertope $\Gamma(G;(G_{i})_{i\in I})$ to be the identity subgroup.

\begin{lemma}\label{Borel}
Let $\Gamma(G;(G_{i})_{i\in I})$ be a regular hypertope.
Then $B = \cap_{i\in I}G_i = 1$.
\end{lemma}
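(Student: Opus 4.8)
The plan is to recognise the Borel subgroup $B=\cap_{i\in I}G_i$ as the stabiliser in $G$ of the base chamber of $\Gamma(G;(G_i)_{i\in I})$, and then to use the fact that a regular action on chambers is free.

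First I would spell out the action explicitly. Under right multiplication, an element $g\in G$ fixes the type-$i$ coset $G_i$ (viewed as the type-$i$ element of the base chamber $C:=\{G_i:i\in I\}$) exactly when $G_ig=G_i$, that is, when $g\in G_i$. Consequently $g$ stabilises the whole chamber $C$ if and only if $g\in\cap_{i\in I}G_i=B$, so $B$ is precisely the stabiliser of $C$ under the $G$-action.

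Next, since $\Gamma$ is a regular hypertope, $G$ acts regularly, and in particular freely, on the chambers of $\Gamma$; hence every chamber stabiliser is trivial. Applying this to $C$ yields $B=1$. If instead one wishes to deduce freeness from the hypertope structure rather than assume it, I would repeat the argument used just after the definition of a regular weak hypertope: for $g\in B$ and each $i\in I$, thinness guarantees a unique chamber $C^i$ that is $i$-adjacent to $C$; as $g$ is type-preserving and fixes $C$, it sends $C^i$ to a chamber $i$-adjacent to $g(C)=C$, which by thinness must be $C^i$ again, so $g$ fixes $C^i$. Strong chamber-connectedness, equivalently residual connectedness by Proposition~\ref{rcsc}, then propagates this to show that $g$ fixes every chamber of $\Gamma$, hence acts as the identity on $X$.

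The one point that must be handled with care is the final implication ``$g$ fixes every chamber $\Rightarrow g=1$'', which requires the action of $G$ on $\Gamma$ to be faithful. This faithfulness is exactly what regularity (a free action) supplies, so it is available by hypothesis. I expect this to be the only genuine, if mild, obstacle; the remainder is the bookkeeping identification of $B$ with the base-chamber stabiliser together with the thinness-plus-connectedness propagation already rehearsed in the text.
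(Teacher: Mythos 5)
Your proof is correct, but it takes a genuinely different route from the paper's. You identify $B$ outright as the stabiliser of the base chamber $\{G_i : i\in I\}$ under right multiplication and then invoke freeness of the regular action on chambers; your fallback argument (thinness plus chamber-connectedness propagating a chamber-fixing element to fix every chamber, then faithfulness) is a combinatorial variant of the same idea, and you correctly flag that the last step needs faithfulness, which the regularity hypothesis supplies. The paper instead argues group-theoretically: thinness forces each minimal parabolic $G_{\overline{J}} := \cap_{j\in J}G_j$ (for $|J| = |I|-1$) to contain $B$ with index $2$, hence $B$ is normal in each of them; Theorem~\ref{RCGroupal} (which uses flag-transitivity and residual connectedness) shows that these subgroups generate $G$, so $B$ is normal in $G$ and is therefore the kernel of the action, which a free --- indeed merely faithful --- action forces to be trivial. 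The trade-off is this: your argument is shorter and needs nothing beyond the definition of the coset action together with freeness on chambers, whereas the paper's yields the stronger structural byproduct that $B$ is normal in $G$ and coincides with the kernel, so that triviality already follows from faithfulness alone (exactly the hypothesis one may assume after quotienting by the kernel as in Tits' theorem stated just before the lemma), and it exercises the group-theoretic machinery around which the section is built. Both proofs ultimately rest on the same reading of the hypothesis, namely that ``regular'' for the coset geometry $\Gamma(G;(G_i)_{i\in I})$ includes a free (hence faithful) action of $G$ on the chambers.
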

{\bf Proof:}
The groups  $G_{\overline{J}}:=\cap_{j\in J}G_j$ where $J$ is any subset of $I$ of cardinality $\mid I\mid - 1$ contain $B$ as a subgroup of index 2, for otherwise, thinness is contradicted. Hence $B$ is a normal subgroup of all these groups $G_{\overline{J}}$. Now, as $\Gamma$ is regular and residually connected, by Theorem~\ref{RCGroupal}, the subgroups  $G_{\overline{J}}$ generate $G$ and thus $B$ must also be a normal subgroup of $G$. This means $B$ is a kernel. Then in order to have a free action of $G$ on $\Gamma$, we must have $B = 1$.
\hfill $\Box$

Observe that the subgroups $G_{\overline{J}}$ appearing in the proof above  are cyclic groups of order 2. This will enable us to make the connection between thin regular coset geometries and groups generated by involutions.

For $\Gamma(G;(G_{i})_{i\in I})$ a regular hypertope, we define $\rho_i$ as the generator of the minimal parabolic subgroup $\cap_{j\in I\setminus\{i\}} G_j$. Observe that all the $\rho_i$'s are involutions.
We call the set $\{\rho_i : i \in I\}$  the {\it distinguished generators} of $\Gamma(G;(G_{i})_{i\in I})$.

The following result gives a way to check whether a coset geometry (and in particular a hypertope) is flag-transitive. See also Dehon \cite{Dehon94}.

\begin{theorem} {\emph{(Buekenhout, Hermand \cite{BH91})}}\label{hermandft}
 Let $\mathcal{P}(I)$ be the set of all the subsets of $I$ and let $\alpha: \mathcal{P}(I)\setminus \{ \emptyset \} \to I$ be a function such that $\alpha (J) \in J$ for every $J \subset I$, $J \neq \emptyset$. The coset geometry $\Gamma = \Gamma(G;(G_{i})_{i\in I})$ is flag-transitive if and only if, for every $J \subset I$ such that $\vert J \vert \geq 3$, we have \[ \bigcap_{j \in J - \alpha (J)} (G_j G_{\alpha(J)}) = \Bigg( \bigcap_{j \in J - \alpha (J)} G_j \Bigg) G_{\alpha(J)}. \]
\end{theorem}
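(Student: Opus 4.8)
The plan is to prove, by induction on $|J|$, the equivalent statement that $G$ acts transitively on the flags of type $J$ for every $J \subseteq I$; flag-transitivity is precisely the conjunction of these statements over all $J$ (the case $J=I$ being chamber-transitivity). By Proposition~\ref{tits}, $G$ is already transitive on flags of rank at most $2$, so the cases $|J|\leq 2$ hold unconditionally and form the base of the induction. The whole argument then rests on a single equivalence: \emph{assuming} transitivity on flags of type $J\setminus\{a\}$, where $a:=\alpha(J)$, transitivity on flags of type $J$ is equivalent to the displayed product identity for $J$. Granting this, both directions follow at once: if $\Gamma$ is flag-transitive then each identity holds (apply the equivalence with the higher transitivity already in hand), and conversely if all identities with $|J|\geq 3$ hold then the induction upgrades transitivity from type $J\setminus\{a\}$ to type $J$. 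A pleasant feature worth recording is that only the one distinguished type $\alpha(J)$ per subset is ever used, so a single choice function $\alpha$ suffices rather than a condition over all possible distinguished elements.

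To establish the equivalence I would translate incidence of cosets into membership in products of the maximal parabolic subgroups. First I would normalise an arbitrary flag $\{G_jg_j : j\in J\}$ of type $J$ by right multiplication so that its $a$-element becomes the base coset $G_a$; the sub-collection $\{G_jg_j' : j\in J\setminus\{a\}\}$ is then a flag of type $J\setminus\{a\}$ (its pairwise incidence being inherited), so by the induction hypothesis a further right multiplication by some $g$ carries it to the base flag $\{G_j : j\in J\setminus\{a\}\}$, at the cost of moving the $a$-element to $G_ag$. The incidence $G_ag\cap G_j\neq\emptyset$ unwinds to $g\in G_aG_j$, so after normalisation the only remaining datum is an element $g\in\bigcap_{j\in J\setminus\{a\}}G_aG_j$. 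This normalised flag lies in the orbit of the base flag exactly when it can be returned to base by an element fixing every $G_j$ with $j\in J\setminus\{a\}$, i.e. by an element of $G_{\overline{J\setminus\{a\}}}$, and a direct computation shows this is possible precisely when $g\in G_a\,G_{\overline{J\setminus\{a\}}}$. Hence transitivity on type-$J$ flags is equivalent to $\bigcap_{j\in J\setminus\{a\}}G_aG_j = G_a\,G_{\overline{J\setminus\{a\}}}$, the inclusion $\supseteq$ being automatic; for the forward implication one simply feeds an arbitrary $g$ from the left-hand side into the flag $\{G_ag\}\cup\{G_j : j\in J\setminus\{a\}\}$ and invokes transitivity.

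The last point is cosmetic: the identity produced by this bookkeeping carries the parabolic $G_a$ on the \emph{left} of each product, whereas the statement places $G_{\alpha(J)}$ on the right. The two forms are interchanged by the inverse bijection $x\mapsto x^{-1}$ of $G$, which sends each $G_aG_j$ to $G_jG_a$ and sends $G_a\,G_{\overline{J\setminus\{a\}}}$ to $G_{\overline{J\setminus\{a\}}}\,G_a = \big(\bigcap_{j\in J\setminus\{a\}}G_j\big)G_a$; applying it termwise turns my identity into the one in the statement, so the two are literally equivalent. I expect the genuine obstacle to be exactly this coset bookkeeping in the normalisation step: keeping straight which subgroup lands on which side when the incidence condition is rewritten, ensuring the induction hypothesis is applied to the honestly smaller flag $\{G_jg_j' : j\in J\setminus\{a\}\}$, and verifying that the residual freedom in the normalising element is precisely $G_{\overline{J\setminus\{a\}}}$, so that the final reduction to the base flag is both necessary and sufficient. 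Everything else is formal.
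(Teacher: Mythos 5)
Your proof is correct, and it matches the argument behind the theorem as it stands in the literature: the paper itself gives no proof (it cites Buekenhout--Hermand and Buekenhout--Cohen, Theorem 1.8.10), and the proof found there is the same induction on $|J|$, with the coset identity for $J$ serving exactly as your bridge from transitivity on flags of type $J\setminus\{\alpha(J)\}$ to transitivity on flags of type $J$, the translation between incidence and coset products being the computation you carry out. One reading point your induction silently (and correctly) settles: the identity must also be imposed for $J=I$, i.e.\ the ``$J\subset I$'' in the statement is to be read as $J\subseteq I$ --- otherwise the condition would be vacuous in rank $3$ and the theorem false (cf.\ Example~\ref{ex44}), and indeed your step up to chamber-transitivity uses the identity for $J=I$.
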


A proof of this result is also available in the book by Buekenhout and Cohen (see~\cite[Theorem 1.8.10]{BuekCohen}).

%%%%%%%%%%%%%%%%
\section{C-groups}\label{section4}
A {\em C-group of rank $r$} is a pair $(G,S)$ such that $G$ is a group and $S:=\{\rho_0,\ldots,\rho_{r-1}\}$ is a generating set of involutions of $G$  that satisfy the following property.
\begin{equation}\label{IC}
\forall I, J \subseteq \{0, \ldots, r-1\},
\langle \rho_i \mid i \in I\rangle \cap \langle \rho_j \mid j \in J\rangle = \langle \rho_k \mid k \in I \cap J\rangle.
\end{equation}

\noindent This property is called the {\em intersection property} and denoted by $IP$. We call any subgroup of $G$ generated by a subset of $S$ a {\em parabolic subgroup} of the $C$-group $(G,S)$. In particular, a parabolic subgroup generated by exactly one involution of $S$ is called {\em minimal} and a parabolic subgroup generated by all but one involutions of $S$ is called {\em maximal}. We write $G_J:=\langle \rho_j\mid j \in J\rangle$ for $J\subseteq \{0, \ldots, r-1\}$ and $G_i:=G_{I\setminus\{i\}}$. Obviously, $G_\emptyset = \{1_G\}$ and $G_{\{0, \ldots, r-1\}} = G$. 
%ELISA: ATTENTION TO THE NOTATION WE WILL CHANGE IT SOON

A C-group is a {\em string $C$-group} provided its generating involutions can be reordered in such a way that $(\rho_i\rho_j)^2 = 1_G$ for all $i,j$ with $|i-j| > 1$.

We say that two C-groups $(G,S)$ and $(G',S')$ are {\em isomorphic} if there is an isomorphism $\alpha:G\rightarrow G'$ such that $\alpha(S) = S'$.

The {\em Coxeter diagram} $\mathcal{C}(G,S)$ of a C-group $(G,S)$ is a graph whose vertex set is $S$. Two vertices $\rho_i$ and $\rho_j$ are joined by an edge labelled by $o(\rho_i\rho_j)$. As a consequence, the Coxeter diagram is a complete graph. We take the convention of dropping an edge if its label is 2 and of not writing the label if it is 3.
The Coxeter diagram of a string C-group has a string shape.

From now on, we will construct hypertopes as coset geometries.

%%%%%%%%%%%%%%%%%%%%%%%

%\begin{proposition}\label{trcr3=>ip}
%Let $I:=\{0, 1, 2\}$ and let $\Gamma:= \Gamma(G;(G_i)_{i\in I})$ be a thin, residually connected, regular geometry of rank 3.
% The pair $(G,S)$ where $S$ is the set of distinguished generators of $\Gamma$ is a C-group of rank $3$.
%\end{proposition}
%\begin{proof}
%By definition of the distinguished generators of $\Gamma$ we have that $\langle\rho_j\rangle=\cap_{i\in\{0,1,2\}\setminus\{j\}}\, G_i$, and Theorem \ref{RCGroupal} implies that $G_i=\langle \rho_j\,|j\in\{0,1,2\}\setminus\{i\}\rangle$. Moreover, Lemma \ref{Borel} implies that $G_0 \cap G_1\cap G_2=\{1_G\}$. Let us prove that $\langle \rho_i\rangle\cap\langle\rho_j,\rho_k\rangle = \{1_G\}$ with $i,j,k$ being distinct elements of $\{0,1,2\}$.
%Suppose on the contrary that $\langle\rho_i\rangle\cap\langle\rho_j,\rho_k\rangle\neq \{1_G\}$ then $\rho_i\in \langle\rho_j,\rho_k\rangle=G_i$. Hence $\rho_i\in G_0\cap G_1\cap G_2 = \{1_G\}$, a contradiction.
%Finally, let us prove that $\langle \rho_i,\rho_j\rangle\cap\langle\rho_j,\rho_k\rangle = \langle \rho_j\rangle$ with $i,j,k$ being distinct elements of $\{0,1,2\}$. If this is not true, the index of $\{1_G\}$ in $G_k \cap G_i$ is at least 3. But then, $\Gamma$ is not thin.
%\end{proof}

\begin{theorem}\label{cgroup}
Let $I:=\{0, \ldots, r-1\}$ and let $\Gamma:= \Gamma(G;(G_i)_{i\in I})$ be a regular hypertope of rank $r$.
 The pair $(G,S)$ where $S$ is the set of distinguished generators of $\Gamma$ is a C-group of rank $r$.
\end{theorem}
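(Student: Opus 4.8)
The plan is to verify that $S=\{\rho_i : i\in I\}$ generates $G$ and satisfies the intersection property $IP$; that each $\rho_i$ is an involution is already recorded in the discussion following Lemma~\ref{Borel}. The whole argument rests on a single dictionary between the parabolic subgroups of the geometry and those of the prospective $C$-group, namely the identity
\[ G_{\overline{J}} := \bigcap_{j\in J} G_j = \langle \rho_k : k\in I\setminus J\rangle \qquad \text{for every } J\subseteq I. \]
Once this is available, the maximal parabolic $G_i = G_{\overline{\{i\}}}$ of the geometry coincides with the maximal parabolic $\langle \rho_k : k\neq i\rangle$ of the $C$-group, and everything else follows formally.

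First I would establish the dictionary by downward induction on $|J|$. For the base case $|J|=r-1$, say $J=I\setminus\{i\}$, the subgroup $G_{\overline{J}}$ is by definition the minimal parabolic generated by the distinguished generator $\rho_i$, and $I\setminus J=\{i\}$, so both sides equal $\langle\rho_i\rangle$. For the inductive step, suppose $|J|\le r-2$ and the claim holds for all larger subsets. Since $|J|\le |I|-2$, Theorem~\ref{RCGroupal} gives
\[ G_{\overline{J}} = \Big\langle G_{\overline{J\cup\{k\}}} : k\in I\setminus J\Big\rangle. \]
Applying the inductive hypothesis to each $G_{\overline{J\cup\{k\}}}=\langle \rho_\ell : \ell\in I\setminus(J\cup\{k\})\rangle$ and taking the join over $k$, the generators that occur range over $\bigcup_{k\in I\setminus J}\big(I\setminus(J\cup\{k\})\big)$. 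Because $|I\setminus J|\ge 2$, every $\ell\in I\setminus J$ is omitted only for the single choice $k=\ell$ and retained for all others, so this union is exactly $I\setminus J$; hence $G_{\overline{J}}=\langle\rho_\ell : \ell\in I\setminus J\rangle$, completing the induction.

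With the dictionary in hand, the remaining assertions are immediate. Taking $J=\emptyset$ gives $G=G_{\overline{\emptyset}}=\langle\rho_k : k\in I\rangle$, so $S$ generates $G$. For the intersection property, given $K,L\subseteq I$, I would rewrite $\langle\rho_i : i\in K\rangle=G_{\overline{I\setminus K}}$ and $\langle\rho_j : j\in L\rangle=G_{\overline{I\setminus L}}$ using the dictionary, and then combine the elementary identity $G_{\overline{A}}\cap G_{\overline{B}}=G_{\overline{A\cup B}}$ for intersections of stabilizers with De Morgan's law $(I\setminus K)\cup(I\setminus L)=I\setminus(K\cap L)$ to obtain
\[ \langle\rho_i : i\in K\rangle \cap \langle\rho_j : j\in L\rangle = G_{\overline{I\setminus(K\cap L)}} = \langle\rho_k : k\in K\cap L\rangle, \]
which is exactly $IP$.

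I expect the only genuine work to be the inductive step, and the crucial hypothesis there is residual connectedness, entering through Theorem~\ref{RCGroupal}; the combinatorial point that the complements $I\setminus(J\cup\{k\})$ cover $I\setminus J$ hinges precisely on the bound $|J|\le r-2$ that the theorem requires, so it is reassuring that the two constraints match exactly. The passage from the dictionary to generation and to $IP$ is then purely formal set-theoretic bookkeeping.
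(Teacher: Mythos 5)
Your proof is correct, but it takes a genuinely different route from the paper's. The paper proceeds by induction on the rank $r$ and argues geometrically by contradiction: a violation of the intersection property yields an element $g$ fixing a flag of type $K\cup J$ but lying outside $G_{\overline{K\cup J}}$; since (by the rank induction) $G_{\overline{K\cup J}}$ acts regularly on the residue of that flag, one manufactures an $h$ with $gh^{-1}\neq 1_G$ fixing the base chamber, contradicting the freeness of the chamber action. You instead work entirely on the group side: a downward induction on $|J|$, powered by Dehon's Theorem~\ref{RCGroupal}, establishes the identity $\bigcap_{j\in J}G_j=\langle\rho_k : k\in I\setminus J\rangle$ for all $J$, after which generation (take $J=\emptyset$) and $IP$ (De Morgan applied to intersections of the $G_j$, which satisfy the intersection identity tautologically) are formal. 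The paper's argument makes the geometric content visible --- failure of $IP$ means a nontrivial chamber stabilizer --- and leans on the fact that residues of regular hypertopes are again regular hypertopes; yours is shorter, avoids having to re-verify that residue structure at every rank, makes the generation of $G$ by $S$ explicit (the paper handles it only implicitly, in the proof of Lemma~\ref{Borel}), and isolates precisely where residual connectedness enters. One small point: your dictionary is claimed ``for every $J\subseteq I$,'' but your induction only reaches $|J|\leq r-1$; the case $J=I$, needed if one of $K,L$ is empty in the $IP$ check, is exactly Lemma~\ref{Borel} ($B=1$), so you should either cite it there or note that $IP$ is trivial when $K$ or $L$ is empty.
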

\begin{proof}
The theorem is obviously true for $r = 2$ as the maximal parabolic subgroups are cyclic and have trivial intersection.
Suppose the theorem is true for $r-1$ and let us show it is then true for $r$ by way of contradiction.
Let us denote by $G_{\overline{K}}$ the subgroup $\langle \rho_k \mid k \in I \setminus K\rangle$.
%ELISA: NOW WE CHANGE THE NOTATION DEFINED JUST BEFORE!!!!! ALSO WE SHOULD NOT USE $I$ BECAUSE $I:=\{0,\ldots,r-1\}$.

%If $(G,S)$ does not satisfy ($\ref{IC}$), with $|S| = n+1$, we know that all the residues of rank at most $n$ of $\Gamma$, being thin regular residually connected geometries of rank $n$ give C-groups $(G',S')$.

If $(G,S)$ does not satisfy ($\ref{IC}$), then there is a pair of subgroups, $G_{\overline{K}}$ and $G_{\overline{J}}$ with $K,J \subseteq I$ such that $G_{\overline{K}} \cap G_{\overline{J}} \neq G_{\overline{K \cup J}}$.
Hence $G_{\overline{K}} \cap G_{\overline{J}} > G_{\overline{K \cup J}}$. Take $g \in (G_{\overline{K}} \cap G_{\overline{J}}) \setminus G_{\overline{K \cup J}}$.
This $g$ fixes a flag of type $K \cup J$ in the base chamber $\{G_0, \ldots, G_{r-1}\}$.
But the action of $G_{\overline{K\cup J}}$ must be regular on the residue $\Gamma_F$ of the flag $F:=\{G_i \mid i \in K \cup J\}$. Indeed, that residue is also a thin regular residually connected geometry and its distinguished generators are exactly those of $G_{\overline{K\cup J}}$ and satisfy (\ref{IC}) by induction.
Any element of $G_{\overline{K\cup J}}$ will fix all elements of $\{G_j \mid j \in K \cup J\}$.
Since $G_{\overline{K\cup J}}$ is regular on $\Gamma_F$, there must exist an element $h \in G_{\overline{K\cup J}}$ that sends the flag $\{G_k \mid k \in I\setminus (K\cup J)\}$ onto $\{G_k*g \mid k \in I\setminus (K\cup J)\}$.
But then $g*h^{-1}\not = 1_G$ fixes the base chamber $\{G_i \mid i \in I\}$, a contradiction with the regularity of the action of $G$ on the chambers of $\Gamma$.
\end{proof}

%Observe that the above proposition cannot be proven in arbitrary ranks. Indeed a computer search in the primitive group of degree 16 named $2^4:S_5$ and numbers P16N13 in~\cite{BL96} shows that there are examples (plenty of them) for the primitive group $2^4:S_5$ on 16 points of thin regular residually connected coset geometries $\Gamma:= \Gamma(G;(G_i)_{i\in I})$ such that the corresponding pair $(G,S)$, where $S$ is the set of distinguished generators of $\Gamma$, does not satisfy IP and hence is not a C-group.

%DESCRIBE AN EXAMPLE OR FIND A SMALLER EXAMPLE THAT IS UNDERSTANDABLE AND BELIEVABLE TO EVERYONE.

Observe that we can construct a coset geometry $\Gamma(G;(G_i)_{i\in I})$ in a natural way from a C-group $(G,S)$ of rank $r$ by letting $G_i=G_{I\setminus\{i\}}$ as above.
This construction always gives  a thin, residually connected, regular coset geometry when the rank is at most 2.
We next show that this construction will not always give, for rank three (and therefore also not for higher ranks), a thin, residually connected, regular coset geometry. To this end, we recall a group-theoretical result of Tits.

\begin{lemma} (Tits~\cite{Tits74})\label{titsft}
Let $G_0,G_1,G_2$ be three subgroups of a group $G$. Then the following conditions are equivalent.
 \begin{enumerate}
  \item $G_0G_1 \cap G_0G_2 = G_0(G_1 \cap G_2)$
  \item $(G_0 \cap G_1) \cdot (G_0 \cap G_2) = (G_1G_2) \cap G_0$
  \item If the three cosets $G_0x$, $G_1y$ and $G_2z$ have pairwise nonempty intersection, then $G_0x \cap G_1y \cap G_2z \neq \emptyset$.
 \end{enumerate}
\end{lemma}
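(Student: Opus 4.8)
The plan is to treat the geometric condition (3) as the hub and to establish the two equivalences (2) $\Leftrightarrow$ (3) and (1) $\Leftrightarrow$ (3); transitivity then gives the full statement. The first observation is that in both (1) and (2) one inclusion is automatic: $G_0(G_1\cap G_2)\subseteq G_0G_1\cap G_0G_2$ and $(G_0\cap G_1)(G_0\cap G_2)\subseteq (G_1G_2)\cap G_0$ hold for arbitrary subgroups, so in each case only the reverse inclusion carries content.

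The main computational device is the dictionary between coset intersections and products of subgroups: for cosets $G_ia$ and $G_jb$ one has $G_ia\cap G_jb\neq\emptyset$ if and only if $ab^{-1}\in G_iG_j$. I would combine this with two reductions that preserve all pairwise and triple intersections of a triple of cosets $G_0x,G_1y,G_2z$: first right-translate so that the $G_0$-coset becomes $G_0$ itself, and then exploit the freedom in the choice of coset representatives — legitimate precisely because the relevant pairwise intersections with $G_0$ are nonempty — to replace the representatives of the other two cosets by elements of $G_0$. After this normalisation the surviving pairwise hypothesis reads exactly $yz^{-1}\in(G_1G_2)\cap G_0$, and a short calculation shows that the triple intersection is nonempty if and only if $yz^{-1}\in(G_0\cap G_1)(G_0\cap G_2)$. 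Reading these two conditions against each other is precisely statement (2), yielding (2) $\Leftrightarrow$ (3).

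For (1) $\Leftrightarrow$ (3) I would argue directly. To get (3) $\Rightarrow$ (1), given $m\in G_0G_1\cap G_0G_2$ I would apply (3) to the specific triple $G_0,\,G_1m^{-1},\,G_2m^{-1}$: the hypotheses $G_0\cap G_1m^{-1}\neq\emptyset$ and $G_0\cap G_2m^{-1}\neq\emptyset$ are exactly the memberships $m\in G_0G_1$ and $m\in G_0G_2$, while $G_1m^{-1}\cap G_2m^{-1}=(G_1\cap G_2)m^{-1}$ is automatically nonempty; a point $w$ in the resulting triple intersection gives $m\in G_0(G_1\cap G_2)$. For the converse I would use the normalised form above: writing the surviving hypothesis as $yz^{-1}=p_1p_2$ with $p_i\in G_i$, the element $y^{-1}p_1$ lies in $G_0G_1\cap G_0G_2$, so (1) factors it through $G_1\cap G_2$, and unwinding this produces $d_1\in G_0\cap G_1$ and $d_2\in G_0\cap G_2$ with $yz^{-1}=d_1d_2^{-1}$, which is the factorisation condition equivalent to the triple intersection being nonempty.

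I expect the bookkeeping of left versus right cosets and the justification of the representative-replacement step to be the most delicate part; once the coset-to-product dictionary and the pivoting reduction are set up cleanly, each implication collapses to a one-line manipulation. A pleasant consistency check is that condition (3) is manifestly symmetric in $G_0,G_1,G_2$, which forces the asymmetric-looking conditions (1) and (2) to be symmetric as well.
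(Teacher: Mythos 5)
Your proof is correct. Note, however, that the paper itself offers no proof of this lemma: it is quoted as a known result from Tits' book on buildings of spherical type, so there is no internal argument to compare yours against; what you have produced is a self-contained elementary proof of the cited result. Your plan goes through exactly as sketched. The dictionary $G_ia\cap G_jb\neq\emptyset \Leftrightarrow ab^{-1}\in G_iG_j$ is right, and the normalisation step is legitimate for the reason you give: after translating $x$ to $1$, any element of the nonempty set $G_0\cap G_1y$ lies in $G_0$ and is simultaneously a representative of the coset $G_1y$, and likewise for $G_2z$. The key characterisation also checks out: with $y,z\in G_0$, the triple intersection is nonempty if and only if $yz^{-1}\in(G_0\cap G_1)(G_0\cap G_2)$ (given $yz^{-1}=d_1d_2$ with $d_1\in G_0\cap G_1$, $d_2\in G_0\cap G_2$, the element $w:=d_1^{-1}y=d_2z$ lies in $G_0\cap G_1y\cap G_2z$, and conversely any $w$ in the triple intersection yields $yz^{-1}=(wy^{-1})^{-1}(wz^{-1})$ of the required form). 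In $(3)\Rightarrow(1)$ your triple $G_0$, $G_1m^{-1}$, $G_2m^{-1}$ works, a point $w$ of the triple intersection giving $m=w^{-1}(wm)\in G_0(G_1\cap G_2)$; and in $(1)\Rightarrow(3)$ the identity $y^{-1}p_1=z^{-1}p_2^{-1}$ puts that element in $G_0G_1\cap G_0G_2$, and writing it as $g_0c$ with $c\in G_1\cap G_2$ gives $d_1:=p_1c^{-1}=yg_0\in G_0\cap G_1$ and $d_2:=p_2^{-1}c^{-1}=zg_0\in G_0\cap G_2$ with $yz^{-1}=d_1d_2^{-1}$, as you claim. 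The symmetry check at the end is a nice touch.
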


\begin{proposition}\label{cgrouprk3}
Let $(G,\{\rho_0, \rho_1, \rho_2\})$ be a C-group of rank three
 and let $\Gamma:=\Gamma(G;\{\langle \rho_1, \rho_2\rangle,\langle \rho_0, \rho_2\rangle,\langle \rho_0, \rho_1\rangle\})$.
Then $\Gamma$ is thin if and only if $G$ is regular on $\Gamma$. Moreover, if $\Gamma$ is thin (or regular), it is a regular hypertope.
\end{proposition}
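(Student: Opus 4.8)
The plan is to show that for this rank-three coset geometry the three notions \emph{thin}, \emph{flag-transitive (under $G$)}, and \emph{regular (under $G$)} all coincide, and that any one of them forces residual connectedness, so that $\Gamma$ is automatically a regular hypertope. I would first record that $\Gamma$ is a genuine geometry: any rank-two flag $\{G_ix,G_jy\}$ has a common element $w$, hence equals $\{G_iw,G_jw\}$ and extends to $\{G_0w,G_1w,G_2w\}$ (a chamber, since $w$ lies in all three cosets), so every flag lies in a chamber. By Proposition~\ref{tits}, $G$ is transitive on flags of rank at most two; consequently every rank-one residue is isomorphic (via right multiplication) to the residue of one of the three rank-two flags of the base chamber $\{G_0,G_1,G_2\}$, so it suffices to analyse those three residues, where $G_0=\langle\rho_1,\rho_2\rangle$, $G_1=\langle\rho_0,\rho_2\rangle$, $G_2=\langle\rho_0,\rho_1\rangle$.

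The crux is to identify thinness with flag-transitivity. Consider the residue of the base flag $\{G_1,G_2\}$: its elements are the type-$0$ cosets $G_0g$ incident to both $G_1$ and $G_2$, i.e. those with $g\in G_0G_1\cap G_0G_2$, so the residue has as many elements as there are right cosets of $G_0$ contained in $G_0G_1\cap G_0G_2$. The intersection property~(\ref{IC}) gives $G_1\cap G_2=\langle\rho_0\rangle$ with $\rho_0\notin G_0$, so $G_0(G_1\cap G_2)=G_0\cup G_0\rho_0$ always accounts for exactly two such cosets. Hence this residue is thin if and only if $G_0G_1\cap G_0G_2=G_0(G_1\cap G_2)$, which is precisely condition~(1) of Lemma~\ref{titsft}. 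By that lemma condition~(1) is equivalent to condition~(3), and since condition~(3) is symmetric in $G_0,G_1,G_2$ it governs all three base residues at once. Finally, condition~(3) says exactly that any three pairwise-incident elements $G_0x,G_1y,G_2z$ share a common element $w$, i.e. form the chamber $\{G_0w,G_1w,G_2w\}$ — which is the assertion that $G$ is transitive on chambers. As $G$ is already transitive on flags of rank at most two, chamber-transitivity is the same as flag-transitivity; thus $\Gamma$ is thin if and only if $G$ is flag-transitive on $\Gamma$.

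It remains to upgrade flag-transitivity to regularity and to obtain the ``moreover''. Assuming $\Gamma$ thin (equivalently flag-transitive), I would verify residual connectedness via Theorem~\ref{RCGroupal}: for $J=\emptyset$ one needs $\langle G_0,G_1,G_2\rangle=G$, which holds since the three maximal parabolics jointly contain $\rho_0,\rho_1,\rho_2$; and for each singleton $J=\{i\}$ one needs $G_i=\langle G_i\cap G_j : j\neq i\rangle$, which holds because~(\ref{IC}) gives $G_i\cap G_j=\langle\rho_k\rangle$ and the two resulting reflections generate $G_i$. Hence $\Gamma$ is a thin, residually connected, flag-transitive hypertope. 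By the argument recalled after the definition of regular weak hypertope, a type-preserving automorphism fixing a chamber fixes every $i$-adjacent chamber (by thinness), hence fixes everything (by residual connectedness); so $Aut_I(\Gamma)$, and therefore its subgroup $G$, acts freely on chambers. Being free and transitive, the action of $G$ is regular, so $\Gamma$ is a regular hypertope. The converse of the biconditional is immediate: if $G$ is regular it is in particular flag-transitive, whence $\Gamma$ is thin by the equivalence above.

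The main obstacle is the second paragraph, specifically the clean dictionary between ``the rank-one residue has exactly two elements'' and Tits' coset identity~(1): one must use the intersection property to pin the expected part $G_0(G_1\cap G_2)$ down to exactly two cosets of $G_0$, so that the \emph{only} way to break thinness is the presence of extra cosets in $G_0G_1\cap G_0G_2$, which Lemma~\ref{titsft} ties to chamber-transitivity. Once this equivalence is established, the remaining verifications (residual connectedness through Theorem~\ref{RCGroupal} and freeness through the thinness argument) are routine consequences of the C-group intersection property.
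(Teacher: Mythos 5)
Your proof is correct, and it pivots on the same key fact as the paper, namely Tits' Lemma~\ref{titsft}, but it is organized along a genuinely different route. The paper proves the two implications separately: chamber-transitivity implies thinness via the residue structure of flag-transitive coset geometries (the residue of a rank-two flag is governed by $G_i\cap G_j\cong C_2$), and thinness implies flag-transitivity contrapositively, by invoking Theorem~\ref{hermandft} together with Lemma~\ref{titsft} to produce a chamber $\{G_0x,G_1y,G_2z\}$ with empty triple intersection and then exhibiting three distinct elements $G_2h$, $G_2\rho_2 h$, $G_2z$ in the residue of $\{G_0x,G_1y\}$. You instead run a single chain of equivalences: thinness of the base residues is equivalent to condition (1) of Lemma~\ref{titsft} (using the intersection property~(\ref{IC}) to pin $G_0(G_1\cap G_2)$ down to exactly two cosets), which is equivalent to the fully symmetric condition (3), which in turn says precisely that every chamber is a translate $\{G_0w,G_1w,G_2w\}$ of the base chamber, i.e.\ chamber-transitivity. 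This bypasses Theorem~\ref{hermandft} entirely --- in rank three, Proposition~\ref{tits} already gives transitivity on flags of rank at most two, so Hermand's criterion is not needed --- and it makes explicit, through the coset count, why extra cosets in $G_0G_1\cap G_0G_2$ are the \emph{only} way thinness can fail, a point the paper's construction of $G_2h$ and $G_2\rho_2h$ establishes implicitly. The endgame coincides (Theorem~\ref{RCGroupal} for residual connectedness), except that you derive freeness of the action from thinness plus chamber-connectedness via Proposition~\ref{rcsc}, whereas the paper reads it off from the triviality of the Borel subgroup $G_0\cap G_1\cap G_2$; both are valid, though the paper's is shorter.
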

\begin{proof}
First as $G_0\cap G_1\cap G_2=\{1_G\}$, $G$ acts freely on $\Gamma$.
Now suppose that $G$ is transitive on the chambers of $\Gamma$.
Take a flag of type $\{i,j\}$. Without loss of generality, we can assume this flag to be $\{G_i,G_j\}$ by the last sentence of Proposition~\ref{tits}. As $G_i \cap G_j  \cong C_2$ there are exactly two elements of type $k$ incident to $G_i$ and $G_j$. Hence $\Gamma$ is thin.

Conversely,  suppose that $\Gamma$ is thin.
If $\Gamma$ is not flag transitive, by Lemma~\ref{titsft} and Theorem~\ref{hermandft}, there exists a chamber $\{G_0x,G_1y,G_2z\}$ such that $G_0x\cap G_1y\cap G_2z=\emptyset$. Let $h\in G_0x\cap G_1y$. Then  $G_2h$ and $G_2\rho_2h$ as well as $G_2z$ are incident to both $G_0x$ and $G_1y$. Hence the residue of the flag $\{G_0x, G_1y\}$ has at least three elements and $\Gamma$ is not thin.
When $\Gamma$ is flag transitive we use Theorem~\ref{RCGroupal} to conclude that $\Gamma$ is residually connected. By Proposition~\ref{rcsc}, $\Gamma$ is therefore a regular hypertope.
\end{proof}

The following example shows that C-groups can give rise to geometries that are neither thin nor regular.
\begin{example}\label{ex44}
The toroidal hypermap usually denoted by $(3,3,3)_{(1,1)}$, can be constructed from the group $G \cong E_9:C_2$ of order 18.
Defining relations for the automorphism group are
\[ \rho_0^2 = \rho_1^2 = \rho_2^2 = (\rho_0 \rho_1 \rho_2)^2 = (\rho_0 \rho_1)^3 = (\rho_1 \rho_2)^3 = 1.\]
The pair $(G,\{\rho_0,\rho_1,\rho_2\})$ is a C-group (satisfying IP). However, the incidence graph of the coset geometry $\Gamma:=\Gamma(G;\{\langle \rho_1, \rho_2\rangle,\langle \rho_0, \rho_2\rangle,\langle \rho_0, \rho_1\rangle\})$ is a $K_{3,3,3}$ and hence $G$ cannot be flag-transitive on $\Gamma$ as $\Gamma$ has $3^3 = 27$ chambers. Moreover, $\Gamma$ is not thin. It is residually connected though.
Observe that, as the incidence graph of $\Gamma$ is a $K_{3,3,3}$, $Aut_I(\Gamma)\cong S_3\times S_3\times S_3$ is flag-transitive on $\Gamma$ but $G$ is not.
\end{example}
The next example shows that even residual connectedness may be lost in higher ranks.
\begin{example}\label{ex4.5}
Let $G \cong A_6$ and define $S := \{\rho_0 = (1,2)(3,4), \rho_1 =(2,6)(3,5), \rho_2=(1,4)(2,3), \rho_3=(1,4)(3,5)\}$.
It can be checked by hand or using {\sc Magma}~\cite{magma} that $(G,S)$ is a C-group.
This C-group was mentioned in~\cite{CJL2013}.
It has the following Coxeter diagram and sublattice.

\begin{center}
\begin{tabular}{cc}
$\xymatrix@-1.5pc{
S_4& *{\bullet} \ar@{-}[rr] \ar@{-}[dd]_5&& *{\bullet} \ar@{-}[dd]^4 & A_5 \\
&&&&&&\\
S_4 &*{\bullet}  \ar@{-}[rr] && *{\bullet}&A_5}$
&
$\xymatrix@-1.5pc{
&&&&&A_6\ar@{-}[dll]\ar@{-}[dllll] \ar@{-}[dr] \ar@{-}[drrrr]&&&&&  \\
&S_4\ar@{-}[ddl] \ar@{-}[ddr]\ar@{-}[ddrrrrrrr]&&A_5\ar@{-}[ddlll]\ar@{-}[ddr]\ar@{-}[ddrrr]&&&A_5\ar@{-}[ddllll]\ar@{-}[ddll]\ar@{-}[ddrrrr]&&&S_4\ar@{-}[ddlll] \ar@{-}[ddl]\ar@{-}[ddr]&\\
&& && && && &&\\
 D_6\ar@{-}[ddrr]\ar@{-}[ddrrrr]&&D_4\ar@{-}[dd]\ar@{-}[ddrrrr]&&D_{10}\ar@{-}[ddll]\ar@{-}[ddrrrr]&& D_4\ar@{-}[ddrr]\ar@{-}[ddll]&&D_8\ar@{-}[ddllll]\ar@{-}[ddll]&&D_6\ar@{-}[ddllll]\ar@{-}[ddll]\\
  &&&&&&&&&&\\
 &&\langle \rho_0\rangle\ar@{-}[drrr]&&\langle \rho_1\rangle\ar@{-}[dr]&&\langle \rho_2\rangle\ar@{-}[dl]&&\langle \rho_3\rangle\ar@{-}[dlll]&&\\
 &&&&&1&&&&&}$
 \end{tabular}
\end{center}

However it is not giving a thin residually connected regular geometry with the construction above.
Indeed, the subgroups $G_1 := \langle (1,2)(3,4), (1,4)(2,3), (1,4)(3,5) \rangle$ and $G_2 := \langle (1,2)(3,4),(2,6)(3,5), (1,4)(3,5)\rangle$ are both isomorphic to $A_5$ and their intersection is a dihedral group of order 10. This means that the corresponding coset geometry will have 6 elements of type 1 and 6 elements of type 2, and that each element of type 1 is incident to each element of type 2. But then, the residue of an element of type $\{0, 3\}$, which is supposed to be a 4-gon by the Coxeter diagram, consists of 4 elements of type 1 and four elements of type 2, having a complete bipartite graph $K_{4,4}$ as incidence graph. Therefore, the coset geometry cannot be thin.
In addition, it can be checked with {\sc Magma} that the coset geometry is also neither residually connected nor flag-transitive. 
\end{example}

Given a thin, residually connected, regular coset geometry $\Gamma:= \Gamma(G;(G_i)_{i\in I})$ and the pair $(G,S)$ defined as above, we have $\mathcal{C}(G,S) \cong \mathcal{B}(\Gamma)$.

\begin{theorem}\label{theorem46}
Let $(G,\{\rho_0, \ldots, \rho_{r-1}\})$ be a C-group of rank $r$ and let $\Gamma := \Gamma(G;(G_i)_{i\in I})$ with $G_i := \langle \rho_j | \rho_j \in S, j \in I\setminus \{i\} \rangle$ for all $i\in I:=\{0, \ldots, r-1\}$.
If $G$ is flag-transitive on $\Gamma$, then
$\Gamma$ is a regular hypertope.
\end{theorem}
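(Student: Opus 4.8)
The plan is to verify the three defining properties of a regular hypertope in turn---that $\Gamma$ is a geometry, that it is thin, and that it is residually connected---since flag-transitivity is assumed, and freeness of the action will then follow automatically from thinness by the remark made after the definition of a regular weak hypertope. The engine throughout is the intersection property $IP$ of the $C$-group $(G,S)$. Iterating the pairwise statement (\ref{IC}) over the factors gives, for every $J\subseteq I$, the identity $\bigcap_{j\in J} G_j = \langle \rho_\ell \mid \ell \in I\setminus J\rangle$, and I would record this computation first, as every subsequent step is an instance of it.

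First I would check that $\Gamma$ is a geometry, i.e.\ that every flag lies in a chamber. By Proposition~\ref{tits} the base chamber $\{G_0,\dots,G_{r-1}\}$ is a chamber, so each $\{G_j\mid j\in J\}$ is a flag of type $J$. Given an arbitrary flag $F$ of type $J$, flag-transitivity of $G$ yields $g\in G$ carrying $\{G_j\mid j\in J\}$ to $F$; applying $g$ to the base chamber produces a chamber containing $F$. For thinness, a rank-one residue is the residue of a corank-one flag, and by flag-transitivity it suffices to treat $F=\{G_j\mid j\neq i\}$. The residue formula for flag-transitive coset geometries together with $IP$ gives $\bigcap_{j\neq i}G_j=\langle\rho_i\rangle\cong C_2$ and $G_i\cap\langle\rho_i\rangle=\langle\rho_\ell\mid \ell\in\{i\}\cap(I\setminus\{i\})\rangle=1$, so $\Gamma_F\cong\Gamma(\langle\rho_i\rangle;\{1\})$ has exactly two elements. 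Hence $\Gamma$ is thin.

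For residual connectedness I would invoke Theorem~\ref{RCGroupal}, applicable because $G$ is flag-transitive. It remains to verify, for each $J\subseteq I$ with $|J|\le|I|-2$, that $\bigcap_{j\in J}G_j=\langle \bigcap_{j\in J\cup\{k\}}G_j \mid k\in I-J\rangle$. By $IP$ the left-hand side equals $\langle\rho_\ell\mid \ell\in I\setminus J\rangle$, while each term on the right equals $\langle\rho_\ell\mid \ell\in (I\setminus J)\setminus\{k\}\rangle$; since $|I\setminus J|\ge 2$, the sets $(I\setminus J)\setminus\{k\}$ cover $I\setminus J$ as $k$ ranges over $I-J$, so the two sides coincide. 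At this point $\Gamma$ is a thin, flag-transitive, residually connected geometry, hence a hypertope; as $G\le Aut_I(\Gamma)$, the group $Aut_I(\Gamma)$ is flag-transitive and, by thinness, acts freely, so $\Gamma$ is a regular hypertope.

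Each step is individually short, so the only care required is the combinatorial bookkeeping in applying $IP$: recognising that the pairwise property iterates to $\bigcap_{j\in J}G_j=\langle\rho_\ell\mid\ell\in I\setminus J\rangle$, and checking the covering condition $|I\setminus J|\ge 2$ that makes the residual-connectedness identity hold. That covering condition is precisely what the hypothesis $|J|\le|I|-2$ supplies, so no genuine obstacle arises; the real content is that the intersection property of the $C$-group is exactly the correct group-theoretic translation of thinness together with residual connectedness for this construction, with flag-transitivity doing the remaining work of making the residue formula and Theorem~\ref{RCGroupal} available.
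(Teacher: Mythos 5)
Your proposal is correct and follows essentially the same route as the paper's (very terse) proof: residual connectedness via Theorem~\ref{RCGroupal} combined with flag-transitivity, thinness from the intersection property forcing the minimal parabolic subgroups $\bigcap_{j\neq i}G_j$ to be $\langle\rho_i\rangle\cong C_2$, and regularity from flag-transitivity together with freeness of the action. The only difference is one of detail: you spell out the iterated form of $IP$, the covering argument verifying the condition of Theorem~\ref{RCGroupal}, and the check that $\Gamma$ is a geometry, all of which the paper leaves implicit.
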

\begin{proof}
Residual connectedness follows from Theorem~\ref{RCGroupal}, the fact that $\Gamma$ is flag-transitive and the definition of $\Gamma$.
The minimal parabolic subgroups of $\Gamma$ are
cyclic groups of order 2 by the intersection property of the C-group, hence $\Gamma$ is thin.
Therefore $\Gamma$ is a hypertope, it is flag-transitive  and from the intersection property the intersection of all maximal parabolic subgroups of $\Gamma$ must be reduced to the identity, hence $\Gamma$ is regular.
\end{proof}

\section{Polytopes and hypermaps}\label{section5}
An abstract polytope ${\mathcal{P}}$ is a ranked partially ordered set whose elements are called {\it faces}. A polytope $\mathcal{P}$ of rank $n$ has faces of ranks $-1,0,\ldots,n$; $\mathcal{P}$ has a smallest and a largest face, of ranks $-1$ and $n$, respectively. Each maximal chain (or chamber) of $\mathcal{P}$ contains $n+2$ faces, one for each rank.  $\mathcal{P}$ is strongly chamber-connected. $\mathcal{P}$ is {\it thin},  that is, for every flag and every $j = 0,\ldots,n-1$, there is precisely {\it one} other ({\em $j$-adjacent}) flag with the same faces except the $j$-face. This condition is also called the \emph{diamond condition}.

Abstract regular polytopes can be identified with string C-groups as shown in~\cite[Theorem 2E11]{ARP}. In this case, the involutions of a string C-group $(G,S)$ can be ordered in such a way that $(\rho_i\rho_j)^2 = 1_G$ for all $i,j$ with $|i-j| > 1$.  The \emph{Schl\"afli type} of a regular polytope ${\mathcal P}$ is given by $\{p_1,p_2,\ldots, p_{n-1}\}$ where $p_i$ is the order of the product of consecutive generators $\rho_{i-1}\rho_i$.
The right cosets of the maximal parabolic subgroups of $(G,S)$ correspond to faces of the polytope.
The rank of the faces of the polytope is induced by the labeling of the generators of $(G,S)$.
By reversing the order of the generators of a string C-group, one obtains the dual of the corresponding polytope.

The main theorem of~\cite{Aschbacher83} can be rephrased in the framework of string C-groups as follows.

\begin{theorem}[Aschbacher, 1983]~\cite{Aschbacher83}
Let $(G,\{\rho_0, \ldots, \rho_{r-1}\})$ be a string C-group of rank $r$ and let $\Gamma := \Gamma(G;(G_i)_{i\in I})$ with $G_i := \langle \rho_j | \rho_j \in S, j \in I\setminus \{i\} \rangle$ for all $i\in I:=\{0, \ldots, r-1\}$.
Then $\Gamma$ is thin, residually connected and regular. Moreover, $\Gamma$ has a string diagram.
\end{theorem}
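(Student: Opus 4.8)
The plan is to collapse the whole statement onto a single property, the flag-transitivity of $G$ on $\Gamma$, and then let the earlier machinery do the rest. A string C-group is in particular a C-group, since its generating involutions satisfy the intersection property, and the subgroups $G_i=\langle\rho_j\mid j\neq i\rangle$ are exactly those appearing in Theorem~\ref{theorem46}. Hence, as soon as $G$ is flag-transitive on $\Gamma$, that theorem delivers at once that $\Gamma$ is thin, residually connected and regular. The string-diagram assertion then comes for free: once $\Gamma$ is a thin, residually connected, regular coset geometry, the remark preceding Theorem~\ref{theorem46} gives $\mathcal{C}(G,S)\cong\mathcal{B}(\Gamma)$, and since the Coxeter diagram of a string C-group has string shape, so does $\mathcal{B}(\Gamma)$. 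Thus the entire theorem reduces to showing that $G$ acts flag-transitively on $\Gamma$.

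To establish flag-transitivity I would argue by induction on the rank $r$. For $r\le 2$ there is nothing to prove, since by Proposition~\ref{tits} the group $G$ is already transitive on all flags of rank less than $3$. For the inductive step, take an arbitrary chamber $C$ and first move its element of type $0$ onto the base element $G_0$; this is possible because $G$ is transitive on the cosets $G_0g$ (the elements of type $0$) and $\mathrm{Stab}_G(G_0)=G_0$. After this move the remaining elements of $C$ all lie in the residue of $G_0$. The crucial claim is that this residue is isomorphic to the coset geometry $\Gamma\big(G_0;(G_i\cap G_0)_{i\neq 0}\big)$ of the rank-$(r-1)$ string C-group $(G_0,\{\rho_1,\ldots,\rho_{r-1}\})$: indeed $G_0=\langle\rho_1,\ldots,\rho_{r-1}\rangle$ is again a string C-group, being a parabolic subgroup, and the intersection property gives $G_i\cap G_0=\langle\rho_k:k\neq 0,i\rangle$, which are precisely its maximal parabolics. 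Granting this identification, the inductive hypothesis says $G_0$ is flag-transitive on the residue, so the move of $C$ onto the base chamber can be completed; concatenating shows $G$ is chamber-transitive, hence flag-transitive.

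The heart of the argument, and the step I expect to be the main obstacle, is this identification of the residue of $G_0$. Using the intersection property one checks that the type-$i$ elements incident to $G_0$ are exactly the cosets $G_ih$ with $h\in G_0$, so only the incidence relation must be matched: one needs, for $i,j\in\{1,\ldots,r-1\}$ and $h,h'\in G_0$, that $G_ih\cap G_jh'\neq\emptyset$ in $G$ forces $(G_i\cap G_0)h\cap(G_j\cap G_0)h'\neq\emptyset$ in $G_0$. Applying Lemma~\ref{titsft} to the triple $(G_0,G_i,G_j)$, this is equivalent to the product identities $G_0G_i\cap G_0G_j=G_0(G_i\cap G_j)$. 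This is precisely where the string hypothesis is indispensable: for a general C-group such identities may fail and flag-transitivity, or even residual connectedness, is genuinely lost, as Examples~\ref{ex44} and~\ref{ex4.5} show. Here the commuting relations $(\rho_i\rho_j)^2=1_G$ for $|i-j|>1$ supply enough factorization of the products $G_0G_i$ and $G_0G_j$ to force the identity, and carrying out this verification — equivalently, checking the Buekenhout--Hermand condition of Theorem~\ref{hermandft} directly — is the technical core of the proof.
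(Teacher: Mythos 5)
Your reduction is sound as far as it goes: a string C-group is in particular a C-group, so Theorem~\ref{theorem46} does deliver thinness, residual connectedness and regularity once flag-transitivity of $G$ on $\Gamma$ is established, and the string-diagram claim then follows from the isomorphism $\mathcal{C}(G,S)\cong\mathcal{B}(\Gamma)$ noted before that theorem. Your inductive scaffolding is also the right shape: moving the type-$0$ element of a chamber onto $G_0$, identifying the residue of $G_0$ with the coset geometry of the parabolic $(G_0,\{\rho_1,\ldots,\rho_{r-1}\})$, and translating that identification, via Lemma~\ref{titsft}, into the product identities $G_0G_i\cap G_0G_j=G_0(G_i\cap G_j)$.

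But the proposal stops exactly where the theorem starts. The sentence claiming that the relations $(\rho_i\rho_j)^2=1_G$ for $|i-j|>1$ ``supply enough factorization of the products $G_0G_i$ and $G_0G_j$ to force the identity'' is an assertion, not an argument, and you yourself label it the technical core. That verification is genuinely nontrivial: the commuting relations do give cheap factorizations (for instance $G_k\subseteq G_jG_l$ whenever $j\le k\le l$, by splitting $G_k$ into its commuting halves $\langle\rho_i : i<k\rangle$ and $\langle\rho_i : i>k\rangle$), but the identities you need, equivalently the Buekenhout--Hermand condition of Theorem~\ref{hermandft}, require an induction on rank interleaved with the intersection property, and this is precisely the substance of Aschbacher's theorem (or of Theorem 2E11 of McMullen and Schulte). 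That is also how the paper itself proceeds: it does not redo this work, but observes that the intersection property is Aschbacher's hypothesis (i) and the string condition his hypothesis (ii), and then cites his result. As written, your text is a correct reduction of the statement to its hardest step, together with an admission that the step is not carried out; to count as a proof it must either execute that factorization argument in full or explicitly invoke a reference that provides it.
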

\begin{proof}
The intersection property of $(G,S)$ implies assumption (i) of Aschbacher's theorem while the string condition implies assumption (ii). Therefore we can apply his result to string C-groups to show that $\Gamma$ is indeed thin, residually connected and regular in all ranks.
\end{proof}

Therefore, abstract regular polytopes, being also string C-groups, are thin regular residually connected coset geometries (or hypertopes).
The diamond condition in polytopes corresponds to thinness in geometries.
A chain (respectively flag) of a polytope is a flag (respectively chamber) in the corresponding geometry.
Strong flag-connectedness in polytopes corresponds to residual connectedness in geometries. The commuting property of non-consecutive generators in string C-groups corresponds to the linearity of the Buekenhout diagram in geometries.
The concept of adjacent flags in polytopes is equivalent to that of adjacent chambers.
Two chambers of an incidence geometry $\Gamma$ of rank $r$ are {\em adjacent} provided their intersection is a flag of cardinality $r-1$.
\begin{theorem}
Let $\Gamma := \Gamma(G;(G_i)_{i\in I})$ be a thin, residually connected, regular coset geometry with a string diagram.
Let $C:= (G,\{\rho_0, \ldots, \rho_{r-1}\})$ where $\{\rho_0, \ldots, \rho_{r-1}\}$ is the set of distinguished generators of $\Gamma$.
Then $C$ is a string C-group.
\end{theorem}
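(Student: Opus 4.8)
The plan is to observe that almost all of the work is already in place. Since $\Gamma$ is thin, residually connected and regular, it is a regular hypertope, so Theorem~\ref{cgroup} applies directly and tells us that $(G,S)$ is a $C$-group of rank $r$; in particular the intersection property $IP$ holds automatically. Consequently the only thing left to establish is the \emph{string condition}: that the distinguished generators can be ordered so that $(\rho_i\rho_j)^2 = 1_G$ whenever $|i-j| > 1$.

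To obtain this, I would exploit the fact (recorded just before the statement) that the Coxeter diagram $\mathcal{C}(G,S)$ and the Buekenhout diagram $\mathcal{B}(\Gamma)$ coincide, and reduce the string condition to the hypothesis that $\mathcal{B}(\Gamma)$ is a string. Concretely, since $\Gamma$ has a string diagram, the types may be labelled $0,\ldots,r-1$ so that $i$ and $j$ are joined by an edge of $\mathcal{B}(\Gamma)$ only when $|i-j| = 1$; equivalently, for every pair with $|i-j| > 1$ the residue of type $\{i,j\}$ is a generalized digon. Because $\rho_i$ is indexed by type $i$, this relabelling of types is simultaneously a reordering of the generators, so the heart of the argument is the translation ``generalized digon $\iff$ commuting generators''.

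For this translation I would compute the relevant rank-two residue group-theoretically. Taking $F := \{G_k : k \in I\setminus\{i,j\}\}$ in the base chamber, the flag-transitive residue formula of Section~\ref{section3} together with the intersection property gives $\cap_{k\in I\setminus\{i,j\}} G_k = \langle \rho_i,\rho_j\rangle$ and maximal parabolics $\langle\rho_j\rangle$ and $\langle\rho_i\rangle$ for the two remaining types. Thus $\Gamma_F$ is the rank-two coset geometry of the dihedral group $\langle\rho_i,\rho_j\rangle$ on the cosets of $\langle\rho_j\rangle$ and $\langle\rho_i\rangle$. A standard fact says that every coset of $\langle\rho_j\rangle$ meets every coset of $\langle\rho_i\rangle$ (i.e. $\Gamma_F$ is a generalized digon) precisely when $\langle\rho_i\rangle\langle\rho_j\rangle = \langle\rho_i,\rho_j\rangle$. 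Since $IP$ forces $\langle\rho_i\rangle\cap\langle\rho_j\rangle = 1$, the left side has order $4$ while the right side has order $2\,o(\rho_i\rho_j)$; equality therefore holds if and only if $o(\rho_i\rho_j) = 2$, that is, $(\rho_i\rho_j)^2 = 1_G$. Feeding this into the previous paragraph yields $(\rho_i\rho_j)^2 = 1_G$ for all $|i-j| > 1$, which is exactly the string condition and completes the proof.

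The main obstacle is the rank-two residue analysis in the third paragraph: one must apply the intersection property carefully to identify both the ambient group $\langle\rho_i,\rho_j\rangle$ and the two parabolics, and then pin down the generalized-digon criterion as an order count in a dihedral group. Everything outside this computation --- promoting $\Gamma$ to a regular hypertope, quoting Theorem~\ref{cgroup}, and reading off the string labelling from $\mathcal{B}(\Gamma)$ --- is routine bookkeeping; indeed, if one is willing simply to cite the stated isomorphism $\mathcal{C}(G,S)\cong\mathcal{B}(\Gamma)$, the argument collapses to the single observation that a missing edge of $\mathcal{C}(G,S)$ means a pair of commuting generators.
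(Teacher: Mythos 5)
Your proof is correct and takes essentially the same route as the paper: the paper's entire proof is the single sentence that the theorem is ``an immediate consequence of Theorem~\ref{cgroup}'', relying implicitly on the remark stated just before it that $\mathcal{C}(G,S)\cong\mathcal{B}(\Gamma)$ for a thin, residually connected, regular coset geometry. Your third paragraph --- the rank-two residue computation identifying $\langle\rho_i,\rho_j\rangle$ with its parabolics $\langle\rho_i\rangle$, $\langle\rho_j\rangle$ and showing that the residue is a generalized digon precisely when $o(\rho_i\rho_j)=2$ --- is exactly the verification of that remark which the paper leaves unproved, so you have filled in correct details of the same argument rather than found a different one.
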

\begin{proof}
This is an immediate consequence of Theorem~\ref{cgroup}.
%We show that $\Gamma$ is a ranked poset which is also an abstract regular polytope.
%The only non-trivial axiom to check is strong flag-connectivity {\color{black}which follows immediately from Proposition~\ref{rcsc}}.
%, meaning that, given two chambers of $\Gamma$, say $\Phi$ and $\Phi'$, there exists a sequence of adjacent chambers $\Phi = \Phi_0, \Phi_1, \ldots, \Phi_k = \Phi'$, all containing $\Psi :=\Phi \cap \Phi'$. The set $\Psi$ is a flag of the geometry $\Gamma$. The residue $\Gamma_\Psi$ of $\Psi$ contains $\Phi\setminus \Psi$ and $\Phi'\setminus \Psi$, that are chambers of $\Gamma_\Psi$. As $\Gamma$ is regular, so is $\Gamma_\Psi$. Hence there exists a word in $G_\Psi$ that maps $\Phi$ to $\Phi'$. This word is written in terms of the generators $\rho_i$'s of $G_\Psi$, which gives a sequence of adjacent chambers from $\Phi$ to $\Phi'$ as needed.
\end{proof}

In rank three, a string C-group induces an abstract regular polyhedron with vertices, edges and faces. When finite, such a polyhedron
can be embedded on a closed surface (orientable or not) without boundary and is usually called reflexible map~\cite{CoxeterMoserED4}.
%Coxeter-Moser
However, not all maps are abstract polytopes as some of them do not satisfy the diamond condition.

Thin regular geometries induced by rank three non-string C-groups, provide examples of reflexible hypermaps. For example, the following Coxeter diagram represents a string C-group $K$ generated by three reflections $\rho_0$, $\rho_1$ and $\rho_2$, that is universal in this case, meaning it is the full Coxeter group $[6,3]$.
\begin{center}
\begin{picture}(120,25)
\put(0,10){\circle*{5}}
\put(60,10){\circle*{5}}
\put(120,10){\circle*{5}}
\put(30,0){6}
\put(0,10){\line(1,0){120}}
% diametres et gonalite
\put(0,20){$\rho_0$}
\put(60,20){$\rho_1$}
\put(120,20){$\rho_2$}
\end{picture}
\end{center}

This C-group is the symmetry group of the regular tessellation of the Euclidean plane by hexagons which is an abstract regular polyhedron of Schl\"afli type $\{6,3\}$.
The regular maps with the same Schl\"afli type $\{6,3\}$ will be embedded on the torus
and induced by adding the relation
\begin{equation}\label{eq1}
(\rho_1\rho_2(\rho_1\rho_0)^2)^b(\rho_2\rho_1(\rho_0\rho_1)^2)^c = 1
\end{equation}
to the universal Coxeter group $[6,3]$ for $b=0$, $c=0$ or $b=c$. The resulting regular map is denoted by $\{6,3\}_{(b,c)}$.

Doubling the fundamental region of the C-group or, in other words, looking at the subgroup $H$ generated by $\rho_1^{\rho_0}, \rho_1, \rho_2$, we get the following non-string Coxeter diagram and we shall also say that $H$ is a non-string C-group and denote it by $[(3,3,3)_{(b,c)}]$.
\begin{center}
\begin{picture}(40,55)
\put(0,45){\circle*{5}}
\put(40,25){\circle*{5}}
\put(0,5){\circle*{5}}
\put(0,5){\line(2,1){40}}
\put(0,45){\line(2,-1){40}}
\put(0,5){\line(0,1){40}}
% diametres et gonalite
\put(-15,5){$\rho_1^{\rho_0}$}
\put(-15,45){$\rho_1$}
\put(45,30){$\rho_2$}
\end{picture}
\end{center}
Taking a quotient of $H$ by adding relation (\ref{eq1}), we get a finite incidence geometry that can be seen as a regular hypermap $(3,3,3)_{(b,c)}$ on the torus. 
Observe also that the hypermaps $(3,3,3)_{(b,c)}$ are hypertopes whenever $(b,c) \neq (1,1)$. 
Indeed, the subgroup $H$ may be also seen as a group acting on the the tessellation of the plane by hexagons where hexagons are split into three families giving the three types, no two hexagons of the same type having a common edge and two hexagons being incident provided they have a common edge. This geometry is obviously residually connected and any $(3,3,3)_{(b,c)}$ is obtained by quotienting this geometry, relation~(\ref{eq1}) giving a partition of the hexagons in equivalence classes. 
Hence residually connectedness remains true in the quotiented geometry.
When $(b,c) = (1,1)$, there are only 3 elements of each type, hence thinness is lost as we showed in Example~\ref{ex44}. When $(b,c) \neq (1,1)$, there are enough elements of each type to have thinness.
As explained in~\cite{BN}, a hypermap $(3,3,3)_{(b,c)}$ with $(b,c)\neq (1,1)$ is a regular hypertope if and only if $bc(b-c) = 0$. Otherwise it is chiral.
We call such hypertopes \emph{toroidal hypertopes} of rank 3.

%\section{Chiral geometries}

\section{C$^+$-groups}\label{section6}

In constructing regular hypertopes from groups, we restricted ourselves to groups generated by involutions.
We now consider a class of groups that are not necessarily generated by involutions, from which we will be able to construct highly symmetric hypertopes. These hypertopes may or may not be regular. In the latter case, they will be chiral. 

Consider a pair $(G^+,R)$ with $G^+$ being a group and $R:=\{\alpha_1, \ldots, \alpha_{r-1}\}$ a set of generators of $G^+$.
Define $\alpha_0:=1_{G^+}$
 and $\alpha_{ij} := \alpha_i^{-1}\alpha_j$ for all $0\leq i,j \leq r-1$.
Let $G^+_I := \langle \alpha_{ij} \mid i,j \in I\rangle$ for $I\subseteq \{0, \ldots, r-1\}$.

If the pair $(G^+,R)$ satisfies condition (\ref{IC+}) below called the {\em intersection property} IP$^+$ (obtained in analogy with the intersection property~(\ref{IC}) of C-groups  keeping only those equalities that involve subsets $I$ and $J$ of cardinality at least two), we say that $(G^+,R)$ is a {\em $C^+$-group}.
\begin{equation}\label{IC+}
\forall I, J \subseteq \{0, \ldots, r-1\}, with \;|I|, |J| \geq 2,
G^+_I \cap G^+_J = G^+_{I\cap J}.
\end{equation}
It follows immediately from the intersection property IP$^+$, that $R$ is an independent generating set for $G^+$, that means that $\alpha_i \not\in \langle \alpha_j : j \neq i\rangle$.

Examples of C$^+$-groups may be constructed from C-groups as follows.
Given a C-group $(G,S)$ with  $S :=\{\rho_0, \ldots, \rho_{r-1}\}$, we define the {\it rotation subgroup} $(G^+,R)$
where $R := \{ \alpha_j := \rho_0\rho_j : j \in \{1, \ldots, r-1\}\}$ and $G^+ := \langle R \rangle$. Let $\alpha_0:=1_{G^+}$. Obviously, $\alpha_{ij} := \rho_i\rho_j = \alpha_i^{-1}\alpha_j \in G^+$ for any choice of $i, j\in \{0, \ldots, r-1\}$.
The subgroup $G^+$ is of index 1 or 2 in $G$.

\begin{proposition}\label{rigs}
Let $(G,S)$ be a C-group and $R$ be defined as above. The set $R$ is an independent generating set for $G^+$. 
\end{proposition}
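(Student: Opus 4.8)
The goal is to show that for each $i \in \{1, \ldots, r-1\}$ one has $\alpha_i \notin \langle \alpha_j : 1 \le j \le r-1,\, j \neq i\rangle$. The plan is to first identify this subgroup with one of the ``plus'' parabolics. Put $I := \{0, \ldots, r-1\}\setminus\{i\}$. Since $\alpha_0 = 1_{G^+}$ and $\alpha_{jk} = \alpha_j^{-1}\alpha_k$, I claim
$$\langle \alpha_j : j \in \{1, \ldots, r-1\},\, j \neq i\rangle = \langle \alpha_j : j \in I\rangle = \langle \alpha_{jk} : j,k \in I\rangle = G^+_I.$$
Here the first equality holds because adjoining the trivial element $\alpha_0$ changes nothing; the second holds because $\alpha_j = \alpha_{0j}$ for $j \in I$ and conversely $\alpha_{jk} = \alpha_j^{-1}\alpha_k$; and the last is the definition of $G^+_I$. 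Thus independence of $R$ is exactly the assertion $\alpha_i \notin G^+_I$.

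Next I would pass from the rotation subgroup back to the ordinary C-group. Each generator $\alpha_{jk}$ of $G^+_I$ equals $\rho_j\rho_k$, so $G^+_I \le G_I := \langle \rho_j : j \in I\rangle = \langle \rho_j : j \neq i\rangle$. Now suppose for contradiction that $\alpha_i \in G^+_I$. Then $\rho_0\rho_i = \alpha_i \in G_I$; since $i \neq 0$ we have $0 \in I$, hence $\rho_0 \in G_I$, and therefore $\rho_i = \rho_0(\rho_0\rho_i) \in G_I$. This places $\rho_i$ in $\langle \rho_i\rangle \cap \langle \rho_j : j \neq i\rangle$, which by the intersection property (\ref{IC}) of the C-group equals $\langle \rho_k : k \in \{i\} \cap I\rangle = \{1_G\}$. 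But $\rho_i$ is an involution, so $\rho_i \neq 1_G$, a contradiction. Hence $\alpha_i \notin G^+_I$, completing the argument.

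The argument is short, and the only real work is the bookkeeping in the first paragraph that rewrites the subgroup generated by the chosen $\alpha_j$ as the single ``plus'' parabolic $G^+_I$, together with the containment $G^+_I \le G_I$. Once this is in place, the statement reduces to the mildest instance of the intersection property, namely $\langle \rho_i\rangle \cap \langle \rho_j : j \neq i\rangle = \{1_G\}$; this is precisely where the hypothesis that $(G,S)$ is a C-group enters, and it is the crux of the proof. It is worth emphasising that independence is far weaker than the full property IP$^+$ of (\ref{IC+}): it follows from (\ref{IC}) applied to singletons alone, so it can be obtained here without first establishing that $(G^+,R)$ is itself a $C^+$-group.
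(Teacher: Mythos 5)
Your proof is correct and follows essentially the same route as the paper: translate the hypothetical dependence $\alpha_i \in \langle \alpha_j : j \neq i\rangle$ into a statement inside the C-group $(G,S)$ and invoke the intersection property (\ref{IC}) to reach a contradiction. The only difference is which instance of (\ref{IC}) is used: the paper intersects $\langle\rho_0,\rho_i\rangle$ with $\langle \rho_0,\rho_j : j \neq i\rangle$ to force $\rho_0=\rho_i$, while you first pull $\rho_i$ itself into $\langle\rho_j : j\neq i\rangle$ and apply the singleton instance $\langle\rho_i\rangle\cap\langle\rho_j : j\neq i\rangle=\{1_G\}$ to force $\rho_i=1_G$ --- a cosmetic variation.
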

\begin{proof}
Suppose $R$ is not an independent generating set. Then there exists $i$ such that $\alpha_i \in \langle \alpha_j | j \in \{1,\ldots, r-1\} \setminus\{i\}\rangle$.
Hence $\rho_0\rho_i \in \langle \rho_0\rho_j | j \in \{1,\ldots, r-1\} \setminus\{i\}\rangle$. But $(G,S)$ is a C-group. Thus $\langle \rho_0,\rho_i\rangle \cap \langle \rho_0,\rho_j | j \in \{1,\ldots, r-1\} \setminus\{i\}\rangle$ must be equal to $\langle \rho_0 \rangle$. Therefore $\rho_0 = \rho_i$ and $(G,S)$ is not a C-group.
\end{proof}

\begin{proposition}\label{C+}
Let $(G,S)$ be a C-group and $(G^+,R)$ its rotation subgroup as defined above.
If $G^+$ is of index 2 in $G$ then $(G^+,R)$ is a C$^+$-group.
\end{proposition}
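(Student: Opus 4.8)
The plan is to reduce the intersection property $IP^+$ for $(G^+,R)$ to the intersection property $IP$ of the original C-group $(G,S)$. First I would record the elementary identity $\alpha_{ij} = \alpha_i^{-1}\alpha_j = \rho_i\rho_0\rho_0\rho_j = \rho_i\rho_j$, so that $G^+_I = \langle \rho_i\rho_j \mid i,j\in I\rangle$ is exactly the subgroup of $G_I = \langle \rho_i \mid i\in I\rangle$ generated by the products of pairs of generators. The whole argument then rests on a single identity, which I would isolate as the central lemma:
\[ G^+_I = G_I \cap G^+ \qquad\text{for every } I\subseteq\{0,\ldots,r-1\}. \]

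To prove this identity I would exploit the hypothesis $[G:G^+]=2$. Since every index-two subgroup is normal, $G^+\trianglelefteq G$ and the quotient map $\sigma\colon G\to G/G^+\cong C_2$ is available. The key point is that $\sigma(\rho_i)$ is the nontrivial element for every $i$: the generators $\alpha_j=\rho_0\rho_j$ of $G^+$ all lie in $\ker\sigma$, so from $\rho_j=\rho_0\alpha_j$ we get $\sigma(\rho_j)=\sigma(\rho_0)$ for all $j$, whence $G/G^+=\langle\sigma(\rho_0)\rangle$; as this group has order two, $\sigma(\rho_0)\neq 1$ (otherwise $G=G^+$). Thus $\sigma$ is precisely the parity-of-word-length map, and $\sigma(g)=1$ if and only if $g$ can be written as a product of an even number of the $\rho_i$. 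This is the conceptual heart of the argument and the only place where the index-two hypothesis is indispensable: without it, ``parity of length'' is not well defined and the decomposition used below breaks down.

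Granting this, the lemma follows quickly. The inclusion $G^+_I\subseteq G_I\cap G^+$ is immediate, since each generator $\rho_i\rho_j$ with $i,j\in I$ lies in $G_I$ and is an even-length word. Conversely, for $g\in G_I\cap G^+$ I would write $g$ as a word $\rho_{i_1}\cdots\rho_{i_m}$ in the generators indexed by $I$; since $\sigma(g)=1$ the length $m$ is even, and grouping consecutive factors gives $g=(\rho_{i_1}\rho_{i_2})\cdots(\rho_{i_{m-1}}\rho_{i_m})$, a product of the generators $\alpha_{i_{2k-1}i_{2k}}$ of $G^+_I$, so $g\in G^+_I$. With the lemma in hand the conclusion is purely formal: for $I,J$ with $|I|,|J|\ge 2$,
\[ G^+_I\cap G^+_J = (G_I\cap G^+)\cap(G_J\cap G^+) = (G_I\cap G_J)\cap G^+ = G_{I\cap J}\cap G^+ = G^+_{I\cap J}, \]
where the middle equality is the intersection property~(\ref{IC}) of $(G,S)$ and the outer equalities are the lemma. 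I expect the only delicate step to be the clean justification of the parity homomorphism and the even-word decomposition; the reduction to~(\ref{IC}) and the verification of the boundary cases $|I\cap J|\le 1$ (where both sides are trivial, using $\rho_k\notin G^+$) are routine.
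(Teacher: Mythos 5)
Your proof is correct, but it takes a genuinely different route from the paper's. The paper argues by contradiction: it supposes $G^+_I \cap G^+_J > G^+_{I\cap J}$ for some pair $I,J$, invokes (without proof) the fact that $G^+_{I\cap J}$ has index at most $2$ in $G_{I\cap J}$ to force $G^+_I \cap G^+_J = G_{I \cap J}$, and then splits into cases: if $I\cap J \neq \emptyset$ this would put some $\rho_i$ inside $G^+$, which the index-$2$ hypothesis forbids, while if $I\cap J = \emptyset$ the resulting chain $\{1_G\} \geq G^+_I \cap G^+_J > G^+_{I\cap J}$ is absurd. You instead isolate the structural identity $G^+_K = G_K \cap G^+$ for every $K$, prove it via the parity homomorphism $G \to G/G^+$ (whose nontriviality on every $\rho_i$ is exactly where the index-$2$ hypothesis enters) together with the even-word pairing, and then IP$^+$ follows from the intersection property~(\ref{IC}) of $(G,S)$ by a one-line formal computation, with no contradiction argument and no case split. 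The substance of the two proofs is close --- your even-word pairing is precisely what the paper's unproven index-at-most-$2$ claim encodes, and both use $\rho_i \notin G^+$ --- but your organization buys several things: it fills in the step the paper leaves implicit, it produces the reusable description of the parabolic subgroups of the rotation subgroup as the ``even parts'' $G_K \cap G^+$ of the parabolics of $(G,S)$, and it makes transparent why the hypothesis $[G:G^+]=2$ cannot be dropped (consistent with the paper's example right after this proposition, where $G^+ = G$ and IP$^+$ fails). What the paper's argument buys in return is brevity. A minor remark: unlike the paper you never need Proposition~\ref{rigs}, and indeed it is not needed, since independence of $R$ is itself a consequence of IP$^+$.
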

\begin{proof}
By Proposition~\ref{rigs}, $R$ is an independent generating set.
Moreover, as $G^+$ is of index 2 in $G$, $\rho_i\not\in G^+$ for every $i=0,\ldots ,r-1$.
Suppose $(G^+,R)$ is not a $C^+$-group. Then there exist $I,J \subseteq \{0, \ldots, r-1\}$ with $G^+_I \cap G^+_J > G^+_{I\cap J}$.
We have $G_{I\cap J} = G_I\cap G_J \geq G^+_I \cap G^+_J > G^+_{I\cap J}$. Moreover, the index of $G^+_{I\cap J}$ in $G_{I\cap J}$ is at most 2, hence it is 2 for otherwise $G^+_I \cap G^+_J = G^+_{I\cap J}$.
This imply that $G^+_I\cap G^+_J = G_{I \cap J}$. Therefore $I\cap J$ must be empty for otherwise there exists a $\rho_i\in G^+$, a contradiction.
Now, if $I\cap J$ is empty, we have $\{1_G\} = G_{I\cap J} = G_I\cap G_J \geq G^+_I \cap G^+_J > G^+_{I\cap J}$ which is also impossible.
Therefore $(G^+, R)$ is a $C^+$-group.

%Let $A=\langle \alpha_{ij} \mid i,j \in I\rangle$,  $B=\langle \alpha_{ij} \mid i,j \in J\rangle$ and $C=\langle \alpha_{ij}\mid i,j \in I \cap J\rangle$.
%We have that $A\cap B\supseteq C$.
%Assume first that $I\cap J$ is nonempty and suppose without loss of generality that $0\in I\cap J$.
%Define 
%$S_A := \langle \rho_i | i \in I\rangle$, 
%$S_B := \langle \rho_i | i \in J\rangle$ and 
%$S_C := \langle \rho_i | i \in I\cap J\rangle$.
%Since $(G,S)$ is a C-group, we have $S_A \cap S_B = S_C$.
%Moreover, $C$ is of index 2 in $S_C$ as $G^+$ is of index 2 in $G$,
 %and similarly $A\cap B$ is of index 2 in $S_A \cap S_B$. Hence $A\cap B = C$. 
%Assume now that $I\cap J = \emptyset$. Then $A \cap B  \leq \langle \rho_i | i \in I\rangle \cap \langle \rho_i | i \in J\rangle = \{1\}$ and therefore $A\cap B = C$.
\end{proof}

The next example shows that it is possible to construct examples of C-groups $(G,S)$ where $(G^+,R)$ does not satisfy $IP^+$ when $G^+ = G$.
\begin{example}
Let

$\rho_0 := (3, 6)(4, 7)(5, 9)(8, 10)(11, 12),$

$\rho_1 := (2, 4)(3, 6)(5, 10)(8, 12)(9, 11),$

$\rho_2 := (1, 2)(3, 5)(4, 7)(6, 9)(8, 11)(10, 12),$ and 
    
    $\rho_3:= (1, 3)(2, 5)(4, 8)(7, 11)(10, 12)$.
    
    Let $S:= \{\rho_0,\rho_1,\rho_2,\rho_3\}$ and $G := \langle S\rangle$.
    It can be checked by hand or using {\sc Magma} that $(G,S)$ is a C-group but that $(G^+,R)$ is not a $C^+$-group as can be found by taking $I:=\{0,1,2\}$ and $J :=\{1,2,3\}$ to check that (\ref{IC+}) is not satisfied.
\end{example}
Let $(G^+,R)$ be a C$^+$-group.
It is convenient to represent $(G^+, R)$ by the following complete  graph with $r$ vertices which we will call the {\it $B$-diagram} of $(G^+,R)$ and denote by $\mathcal{B}(G^+,R)$. The vertex set of $\mathcal{B}$ is the set $\{\alpha_0, \ldots, \alpha_{r-1}\}$. Since $\alpha_i\alpha_j^{-1} = (\alpha_i^{-1}\alpha_j)^{\rho_0}$, the edges $\{\alpha_i,\alpha_j\}$ of this graph are labelled by $o(\alpha_i^{-1}\alpha_j) = o(\alpha_j^{-1}\alpha_i)= o(\alpha_i\alpha_j^{-1})$ . We take the convention of dropping an edge if its label is 2 and of not writing the label if it is 3.
Vertices of $\mathcal{B}$ are represented by small circles in order to distinguish from the vertices of a Coxeter diagram, which represent involutions.
Observe that a C-group $(G,S)$ and its corresponding C$^+$-group $(G^+,R)$ will have isomorphic diagrams. The main difference is that the vertex set of the Coxeter diagram of $(G,S)$ is $S$ while the vertex set of the $B$-diagram of a $(G^+,R)$ is $R\cup \{1_{G^+}\}$.

For instance, the automorphism group of a chiral 4-polytope of type $\{6,3,3\}$ with toroidal facets has the following $B$-diagram.
\begin{center}
\begin{picture}(180,40)
\put(0,15){\circle{5}}
\put(60,15){\circle{5}}
\put(120,15){\circle{5}}
\put(180,15){\circle{5}}
\put(30,3){6}
\put(3,15){\line(1,0){54}}
\put(63,15){\line(1,0){54}}
\put(123,15){\line(1,0){54}}
% diametres et gonalite
\put(-20,25){$\alpha_0=1_{G^+}$}
\put(55,25){$\alpha_1$}
\put(115,25){$\alpha_2$}
\put(175,25){$\alpha_3$}
\end{picture}
\end{center}

%Observe that such a polytopes has facets isomorphic to maps on the torus and that the two generators $\alpha_1$ and $\alpha_2$ represent rotations: $\alpha_1$ corresponds to a {\color{red}rotation around the centre of a 2-face of a facet and $\alpha_2$ is a half-turn around the centre of an edge of that 2-face,

In~\cite{SW1991}, the set of generators of $G^+$ is usually denoted by $\sigma_1, \ldots, \sigma_{r-1}$.
We note that in the example above, $\alpha_1 = \sigma_1$, $\alpha_2 = \sigma_1\sigma_2$ and $\alpha_3 = \sigma_1\sigma_2\sigma_3$, and more generally, given an abstract chiral polytope of Sch\"afli type $\{p_1, p_2,\ldots, p_{r-1}\}$ with generators $\sigma_1, \ldots, \sigma_{r-1}$, we have $\alpha_1 = \sigma_1$, $\alpha_2 = \sigma_1\sigma_2$ and $\alpha_i = \sigma_1\sigma_2\sigma_3\ldots \sigma_i$ for $2\leq i\leq r-1$.
Given an abstract chiral polytope and a set of generators $\sigma_1, \ldots, \sigma_{r-1}$ of its automorphism group $G^+$ as in~\cite{SW1991}, there is no automorphism $g$ of $G^+$ such that $g(\sigma_1) = \sigma_1^{-1}$, $g(\sigma_2) = \sigma_1^2\sigma_2$ and $g(\sigma_i) = \sigma_i$ for $3\leq i \leq r-1$. 
If such an automorphism exists, the polytope is regular. In this case the group $G^+$ is of index one or two in a C-group $(G,S)$ with $S = \{\rho_0, \ldots, \rho_{r-1}\}$ where $\rho_0 :=g$ and $\rho_i=g\alpha_i$ for $i=1,\ldots, r-1$. 
In terms of the generators $\{\alpha_i : i\in \{1,\ldots, r-1\}\}$, this condition is equivalent to having no $g\in Aut(G^+)$ such that $g(\alpha_i) = \alpha_i^{-1}$ for all $1\leq i\leq r-1$. The generators $\sigma_i$'s are a natural choice in the case of geometries with a linear diagram. We adopt a different set of generators, needed for the cases where the diagram is not linear. Our generators correspond in rank three to the ones usually chosen in the literature on maps and hypermaps.

\section{Chiral hypertopes as coset geometries}\label{section7}

Given a chiral hypertope $\Gamma(X,*,t,I)$ (with $I:=\{0, \ldots, r-1\}$) and its automorphism group $G^+:=Aut_I(\Gamma)$, pick a chamber $C$.
For any pair $i\neq j \in I$, there exists an automorphism $\alpha_{ij}\in G^+$ that maps $C$ to $(C^i)^j$.
Also, $C\alpha_{ii}=(C^i)^i=C$ and $\alpha_{ij}^{-1}=\alpha_{ji}$.
We define 
%ELISA: WE COULD DEFINE THE DISTINGUISHED GENERATORS JUST $\alpha_0 := 1_{G^+}$ AND $\alpha_i:=\alpha_{0i} \;(i=1\ldots, r-1)$.
%DIMITRI: NOT SURE IF THIS HAS AN IMPACT ON THE REST, SO I SUGGEST TO LEAVE AS IT IS.
\[\alpha_i := \alpha_{0i}\;(i=1\ldots, r-1)\]
and call them the {\it distinguished generators} of $G^+$ with respect to $C$.

Arguments similar to those used in the proof of Theorem~\ref{cgroup} permit to show that the pair $(G^+,R)$ is a C$^+$-group, that is, the distinguished generators of $G^+$ satisfy the intersection property IP$^+$ and the relations implicit in some $B$-diagram with vertex set $R\cup\{\alpha_0\}$ where $\alpha_0:= 1_{G^+}$.
The following theorem is the chiral equivalent of Theorem~\ref{cgroup}.

\begin{theorem}\label{cplusgroup}
Let $I:=\{0, \ldots, r-1\}$ and let $\Gamma$ be a chiral hypertope of rank $r$. Let $C$ be a chamber of $\Gamma$.
The pair $(G^+,R)$ where $G^+=Aut_I(\Gamma)$ and $R$ is the set of distinguished generators of $G^+$ with respect to $C$ is a $C^+$-group.
\end{theorem}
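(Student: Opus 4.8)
The plan is to reduce the intersection property IP$^+$ to a statement about flag stabilisers, so that it becomes the near-tautology that the stabilisers of two flags meet in the stabiliser of their union. Fix the base chamber $C = \{c_0, \ldots, c_{r-1}\}$, with $c_k$ of type $k$, and for $I \subseteq \{0, \ldots, r-1\}$ write $F_{I^c}$ for the subflag $\{c_k : k \notin I\}$ of $C$. The argument parallels Theorem~\ref{cgroup}, the only structural novelty being that the generators are not involutions, so I must track the two chamber-orbits (``colours'') of $G^+$ throughout.

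First I would record two preliminaries. (i) The action of $G^+$ on chambers is free: the argument given in the excerpt for regular weak hypertopes applies verbatim, since if $g \in G^+$ fixes $C$ then by thinness $g$ fixes each $i$-adjacent chamber $C^i$, whence $g = 1$ by residual connectedness; this uses thinness and residual connectedness only, not regularity. (ii) Because $\alpha_{ab}$ carries $C$ to $(C^a)^b$, it fixes every element of $C$ whose type is neither $a$ nor $b$; hence for $a,b \in I$ it fixes $F_{I^c}$, giving $G^+_I \leq \mathrm{Stab}_{G^+}(F_{I^c})$. The residue $\Gamma_{F_{I^c}}$ is a hypertope of rank $|I|$ with base chamber $C_0 := C \setminus F_{I^c}$; for $i \in I$ its $i$-adjacency coincides with that of $\Gamma$ and therefore, by the defining property of chirality, interchanges the two colours.

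The heart of the matter is the claim that $G^+_I$ acts transitively on the chambers of $\Gamma_{F_{I^c}}$ lying in the same colour as $C_0$. To prove it I would take such a chamber $D$; by Proposition~\ref{rcsc} the residue is chamber-connected, so there is a path $C_0 = D_0, D_1, \ldots, D_m = D$ of successively adjacent chambers inside $\Gamma_{F_{I^c}}$, with $D_{t-1}$ and $D_t$ being $i_t$-adjacent ($i_t \in I$); since colours alternate along the path while $C_0$ and $D$ share a colour, $m$ is even. I then realise the path by rotations: as automorphisms commute with the adjacency operations, $\alpha_{i_1 i_2}(C_0) = (C_0^{\,i_1})^{i_2} = D_2$, and inductively, if $h_s := \alpha_{i_1 i_2}\cdots \alpha_{i_{2s-1} i_{2s}} \in G^+_I$ satisfies $h_s(C_0) = D_{2s}$, then $D_{2s+2} = ((D_{2s})^{i_{2s+1}})^{i_{2s+2}} = h_s\big((C_0^{\,i_{2s+1}})^{i_{2s+2}}\big) = (h_s\,\alpha_{i_{2s+1} i_{2s+2}})(C_0)$. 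Thus $h_{m/2} \in G^+_I$ carries $C_0$ to $D$, which proves the claim.

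From the claim the theorem follows quickly. Applying it with $I = \{0, \ldots, r-1\}$ shows $\langle R\rangle = G^+_{\{0,\ldots,r-1\}}$ is transitive on the colour of $C$; since $G^+$ is free and itself transitive on that colour, $\langle R \rangle = G^+$, so $R$ is indeed a generating set. For general $I$ with $|I|\geq 2$, given $\beta \in \mathrm{Stab}_{G^+}(F_{I^c})$ the chamber $\beta(C_0)$ lies in the colour of $C_0$, so some $h \in G^+_I$ satisfies $h(C_0) = \beta(C_0)$; then $h^{-1}\beta$ fixes $F_{I^c}$ and $C_0$, hence all of $C$, and so equals $1$ by freeness. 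Therefore $G^+_I = \mathrm{Stab}_{G^+}(F_{I^c})$, while $G^+_{I\cap J}$ is trivial and equals $\mathrm{Stab}_{G^+}(F_{(I\cap J)^c})$ whenever $|I\cap J| \leq 1$ (a colour-preserving automorphism fixing all but one element of $C$ must fix that element too). Since $F_{I^c} \cup F_{J^c} = F_{(I\cap J)^c}$, I conclude
\[
G^+_I \cap G^+_J = \mathrm{Stab}_{G^+}(F_{I^c}) \cap \mathrm{Stab}_{G^+}(F_{J^c}) = \mathrm{Stab}_{G^+}(F_{(I\cap J)^c}) = G^+_{I\cap J},
\]
which is exactly IP$^+$. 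The main obstacle is the transitivity claim: unlike in Theorem~\ref{cgroup}, where regularity makes the reflection subgroup of a residue transitive on all of its chambers for free, here only even adjacency-words remain inside $G^+_I$, so one must verify carefully that equivariance of adjacency lets products of the $\alpha_{ij}$ reproduce precisely the colour-preserving automorphisms of each residue.
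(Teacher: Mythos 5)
Your proof is correct, and although it rests on the same underlying facts as the paper's (freeness of the $G^+$-action on chambers, the alternation of the two chamber orbits forced by chirality, and chamber-connectedness of residues via Proposition~\ref{rcsc}), it is organized along a genuinely different route. The paper argues by contradiction: it takes $g\in (G^+_{\overline{K}}\cap G^+_{\overline{J}})\setminus G^+_{\overline{K\cup J}}$, observes that $g$ fixes the flag $F$ of type $K\cup J$ in the base chamber, splits into cases according to $|K\cup J|$, and in the main case invokes --- without proof --- the assertion that $G^+_{\overline{K\cup J}}$ has at most two orbits on the chambers of $\Gamma_F$, in order to produce $h$ in that parabolic with $gh^{-1}$ either fixing the base chamber (contradicting freeness) or mapping it to an adjacent chamber (contradicting chirality). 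You instead prove the clean identification $G^+_I=\mathrm{Stab}_{G^+}(F_{I^c})$ for $|I|\geq 2$, after which IP$^+$ is pure stabilizer calculus, with the corank-$\leq 1$ intersections handled separately by freeness and chirality; all the work is concentrated in your transitivity claim, proved by lifting even-length adjacency paths (parity being forced by chirality) to products of the $\alpha_{ij}$. That claim is precisely the justification missing for the paper's ``at most two orbits'' assertion, so your argument is in this respect more complete; you also verify that $R$ generates $G^+$, which is part of the definition of a $C^+$-group and is not addressed in the paper's proof at all. The trade-off is length: granting the orbit bound, the paper's contradiction argument is shorter. A minor remark: your computations implicitly fix the left-action convention, under which the geometrically defined $\alpha_{ij}$ coincides with $\alpha_i^{-1}\alpha_j$, so that your subgroups $G^+_I$ are exactly those appearing in the definition of IP$^+$; this is consistent (the paper itself wavers between left and right actions on this point).
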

\begin{proof}
We assume that $r\geq 3$ as no chiral hypertope of rank $\leq 2$ exist.
Let us again denote by $G^+_{\overline{K}}$ the subgroup $\langle \alpha_{ij} | i, j \in I\setminus K\rangle$.
If $|K|\geq r-1$, then $G^+_{\overline{K}} = \{1\}$.
Let $G_i$ be the stabiliser in $G^+$ of the element of type $i$ in $C$ for $i=0, \ldots, r-1$.

If $(G^+,R)$ does not satisfy ($\ref{IC+}$), then there is a pair of subgroups, $G^+_{\overline{K}}$ and $G^+_{\overline{J}}$ with $K,J \subseteq I$, both of size at most $r-2$, such that $G^+_{\overline{K}} \cap G^+_{\overline{J}} \neq G^+_{\overline{K \cup J}}$.
Hence $G^+_{\overline{K}} \cap G^+_{\overline{J}} > G^+_{\overline{K \cup J}}$. Take $g \in (G^+_{\overline{K}} \cap G^+_{\overline{J}}) \setminus G^+_{\overline{K \cup J}}$.
This $g$ fixes a flag of type $K \cup J$ in the base chamber $C =\{G_0, \ldots, G_{r-1}\}$.
But the action of $G^+_{\overline{K\cup J}}$ must be free on the residue $\Gamma_F$ of the flag $F:=\{G_i \mid i \in K \cup J\}$ for otherwise the action of $G^+$ is not free on $\Gamma$, a contradiction.
Any element of $G^+_{\overline{K\cup J}}$ will fix all elements of $F$.
If $|K\cup J| = r$, the element $g$ fixes a chamber, a contradiction with the fact that the action is free.
If $|K\cup J| = r-1$, there are exactly two chambers containing the flag $F$. Since the action is chiral, $g$ must also fix these chambers, and we conclude that the action is not free, a contradiction.
Finally, suppose that $|K \cup J| < r-1$.
Since $G^+_{\overline{K\cup J}}$ has at most two orbits on the flags of $\Gamma_F$, there must exist an element $h \in G^+_{\overline{K\cup J}}$ that sends the flag $\{G_k \mid k \in I\setminus (K\cup J)\}$ onto $\{G_k g \mid k \in I\setminus (K\cup J)\}$ or onto one of its adjacent flags.
In the first case, $gh^{-1}\not = 1_G$ fixes the base chamber $\{G_i \mid i \in I\}$, the action of $G$ on the chambers of $\Gamma$ is not free, a contradiction.
In the second case, $gh^{-1}\not =1_G$ maps the base chamber onto one of its adjacent chambers, contradicting chirality of the hypertope.
\end{proof}

\begin{corollary}
The set $R$ of Theorem~\ref{cplusgroup} is an independent generating set for $G^+$. 
\end{corollary}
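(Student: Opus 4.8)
The plan is to deduce the corollary from Theorem~\ref{cplusgroup} together with the observation, recorded just after the definition of a $C^+$-group, that the intersection property $IP^+$ already forces the generating set to be independent. Since Theorem~\ref{cplusgroup} tells us that $(G^+,R)$ is a $C^+$-group, it satisfies $(\ref{IC+})$, and it only remains to extract independence from $(\ref{IC+})$.

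First I would record the elementary identity $G^+_L=\langle \alpha_\ell : \ell\in L\setminus\{0\}\rangle$, valid whenever $0\in L$; this follows from $\alpha_{k\ell}=\alpha_k^{-1}\alpha_\ell$ and the convention $\alpha_0=1_{G^+}$, which give $\alpha_{0\ell}=\alpha_\ell$ and express every $\alpha_{k\ell}$ in terms of $\alpha_k$ and $\alpha_\ell$. Next, fixing an index $i\in\{1,\ldots,r-1\}$, I would assume for contradiction that $\alpha_i\in\langle \alpha_j : j\in\{1,\ldots,r-1\}\setminus\{i\}\rangle$ and apply $(\ref{IC+})$ to the two subsets $I:=\{0,i\}$ and $J:=\{0,\ldots,r-1\}\setminus\{i\}$, both of which have cardinality at least two because $r\geq 3$. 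By the identity above, $G^+_I=\langle\alpha_i\rangle$ and $G^+_J=\langle \alpha_j : j\neq i\rangle$, while $I\cap J=\{0\}$ gives $G^+_{I\cap J}=\{1_{G^+}\}$. Hence $(\ref{IC+})$ yields $\langle\alpha_i\rangle\cap\langle \alpha_j : j\neq i\rangle=\{1_{G^+}\}$, so the contradiction hypothesis forces $\alpha_i=1_{G^+}$.

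To close the argument I would observe that the distinguished generator $\alpha_i=\alpha_{0i}$ maps the base chamber $C$ to $(C^0)^i$, and since $0\neq i$ this chamber differs from $C$ in both its $0$- and $i$-elements, so $(C^0)^i\neq C$ and therefore $\alpha_i\neq 1_{G^+}$. This contradiction shows $\alpha_i\notin\langle \alpha_j : j\neq i\rangle$ for every $i$, i.e. $R$ is independent; that $R$ generates $G^+$ is built into the choice of the distinguished generators.

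I do not expect any serious obstacle here: the statement is essentially a repackaging of $IP^+$, and the only points requiring care are the translation between the subgroups $G^+_L$ (defined via the $\alpha_{ij}$) and the subgroups generated directly by the $\alpha_\ell$, and the verification that each $\alpha_i$ is nontrivial, which uses only thinness and the definition of $i$-adjacency.
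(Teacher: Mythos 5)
Your proof is correct and follows essentially the same route as the paper's: assume $\alpha_i\in\langle \alpha_j : j\neq i\rangle$ and derive a contradiction with $IP^+$ applied to $I=\{0,i\}$ and $J=\{0,\ldots,r-1\}\setminus\{i\}$. The only difference is that you spell out the details the paper leaves implicit, namely the identity $G^+_L=\langle\alpha_\ell : \ell\in L\setminus\{0\}\rangle$ for $0\in L$ and the verification that $\alpha_i\neq 1_{G^+}$ because it moves the base chamber $C$ to $(C^0)^i$.
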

\begin{proof}
Assume $R$ is not an independent generating set. Then there exists $\alpha_i$ in $R$ such that $\alpha_i \in \langle \alpha_j | j \in \{1,\ldots,r-1\} \setminus \{i\}\rangle$.
But then, the groups $G^+_I$ and $G^+_J$ with $I:=\{0,i\}$ and $J:=\{0,\ldots ,r-1\}\setminus \{i\}$ contradict (\ref{IC+}).
\end{proof}
%A {\em chiral  weak hypertope} is a weak hypertope$C^+$-group $(G^+,R)$ whose induced incidence geometry $\Gamma(G^+,R)$ is thin and chiral.
%A {\em chiral  hypertope} is a chiral weak hypertope whose induced incidence geometry $\Gamma(G^+,R)$ is also strongly chamber connected.
%Observe that every abstract chiral polytope is a chiral hypertope.

%, {\color{cyan} otherwise $\Gamma$ is a regular geometry.} REMOVED AS THERE ARE GEOMETRIES THAT ARE NEITHER CHIRAL NOR REGULAR.
The notion of chirality in incidence geometries was well explored in the case when the diagram of the geometry is linear, that is when the induced chiral geometry is an abstract chiral polytope~\cite{SW1991}.
The automorphism  groups of chiral polytopes are characterized as groups with specific generators $\sigma_1, \ldots, \sigma_{r-1}$ such that $\sigma_i\sigma_{i+1}\ldots \sigma_j$ is of order 2 for all $1\leq i < j \leq r-1$.
Examples can be found where the $\sigma_i$'s are not independent when $r\geq 4$. 
For instance, all chiral polytopes with automorphism group $S_5$ given in~\cite{HHL} have their $\sigma_i$'s not independent.

%-------------------------------------MOVED UP
\section{Constructions and examples}\label{section8} 
Just as regular polytopes can be constructed inductively from regular polytopes of lower rank, similar constructions can be applied to hypertopes. However, such constructions of chiral polytopes are not possible as the $(n-2)$-faces of a chiral polytope of rank $n$ are necessarily regular (see~\cite[Proposition 9]{SW1991}).
Although the result for polytopes cannot be extended to thin geometries with a nonlinear diagram (as these geometries are not necessary posets),
the following result  imposes similar restriction on inductive constructions of chiral hypertopes.

Let $\Gamma(X,*,t,I)$ be an incidence geometry. For $J\subseteq I$, we define the {\em $J$-truncation} of $\Gamma$ as the incidence geometry $^J\Gamma(^JX, ^J*, ^Jt, J)$ where $^JX = t^{-1}(J)$, $^J*$ is the restriction of $*$ to $^JX \times ^JX$ and $^Jt$ is the restriction of $t$ to $^JX$. If $\Gamma(G; (G_i)_{i\in I})$ is a coset geometry and $J\subseteq I$, the $J$-truncation of $\Gamma(G; (G_i)_{i\in I})$ is the coset geometry $\Gamma(G; (G_j)_{j\in J})$.

\begin{theorem}\label{truncations}  
Let $\Gamma(X,*,t,I)$ be a chiral hypertope of rank $|I| \geq 3$. Then, any rank $|I|-2$ truncation of $\Gamma$ is a flag-transitive geometry.
\end{theorem}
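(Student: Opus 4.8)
The plan is to move to the chamber picture and reduce flag-transitivity of the truncation to a single transitivity statement, then exploit the defining feature of a chiral hypertope: the group $G^{+}:=Aut_I(\Gamma)$ has \emph{exactly two} orbits on the chambers of $\Gamma$, with any two adjacent chambers lying in different orbits.

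Fix $J\subseteq I$ with $|J|=|I|-2$ and set $\{a,b\}:=I\setminus J$. The elements of ${}^{J}\Gamma$ are the elements of $\Gamma$ of type in $J$, and a chamber of ${}^{J}\Gamma$ is exactly a flag of $\Gamma$ of type $J$. First I would check that ${}^{J}\Gamma$ is a geometry: a flag of ${}^{J}\Gamma$ is a fortiori a flag of $\Gamma$, hence lies in some chamber $C$ of $\Gamma$, and the type-$J$ part of $C$ is a chamber of ${}^{J}\Gamma$ containing it. Each $g\in G^{+}$ is type preserving and thus restricts to a type-preserving automorphism of ${}^{J}\Gamma$, so the restrictions of elements of $G^{+}$ form a subgroup of $Aut_I({}^{J}\Gamma)$. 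Consequently, once I show that this subgroup is transitive on the chambers of ${}^{J}\Gamma$, the truncation is chamber-transitive, and the chamber-transitive $=$ flag-transitive Proposition of Section~\ref{section2} yields flag-transitivity at once. Thus it suffices to prove that $G^{+}$ is transitive on the flags of $\Gamma$ of type $J$.

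The core of the argument is then short. Let $F_1,F_2$ be two flags of $\Gamma$ of type $J$ and extend each to a chamber $C_1\supseteq F_1$, $C_2\supseteq F_2$ of $\Gamma$, noting that $F_k$ is precisely the set of type-$J$ elements of $C_k$, so that any type-preserving automorphism carrying $C_1$ to $C_2$ carries $F_1$ to $F_2$. By chirality, $C_1$ and $C_2$ lie in the two chamber-orbits of $G^{+}$. If they lie in the same orbit we are done. If they lie in different orbits, I would replace $C_2$ by its $a$-adjacent chamber $C_2^{a}$: since $a\notin J$, the flag $F_2$ avoids the type-$a$ element and so $F_2\subseteq C_2^{a}$ with $F_2$ still the type-$J$ part of $C_2^{a}$; moreover $C_2$ and $C_2^{a}$ are adjacent, hence lie in opposite orbits, so now $C_1$ and $C_2^{a}$ are in the same orbit. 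Any $g\in G^{+}$ with $g(C_1)=C_2^{a}$ then satisfies $g(F_1)=F_2$.

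The step I would flag as the real point (rather than an obstacle in the computational sense) is the orbit-switching move in the last paragraph: it works precisely because $I\setminus J\neq\emptyset$, which the corank-$2$ hypothesis guarantees, so that there is a type $a$ along which a chamber can be flipped to the opposite chiral orbit \emph{without disturbing the chosen flag}. Thinness of the hypertope ensures that $C_2^{a}$ exists and is unique, while the fact that there are exactly two orbits is what makes a single adjacency enough to land in the required orbit. Assembling these observations gives chamber-transitivity of $G^{+}$ on ${}^{J}\Gamma$, and hence the rank $|I|-2$ truncation is a flag-transitive geometry.
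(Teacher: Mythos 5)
Your proposal is correct and takes essentially the same route as the paper's own proof: both extend the two type-$J$ flags to chambers of $\Gamma$ and, when these fall in opposite chiral orbits, flip one chamber along a type outside $J$ (you flip $C_2$ to $C_2^{a}$; the paper replaces $C_1$ by an adjacent chamber $C_1'$ containing the same $J$-flag), exploiting that adjacent chambers lie in distinct orbits and that there are exactly two orbits. The extra bookkeeping you include --- checking that the truncation is a geometry and reducing flag-transitivity to chamber-transitivity via the proposition of Section~\ref{section2} --- is the routine part the paper leaves implicit.
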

\begin{proof}
Let $J\subset I$ with $|J| = |I|-2$ and let $k, l \in I \setminus J$.
Take two chambers $^JC_1$ and $^JC_2$ of $^J\Gamma$.
These two chambers are flags of $\Gamma$ and as $\Gamma$ is a geometry, they can be extended to chambers $C_1 :=$ $^JC_1 \cup \{x_k,x_l\}$ and $C_2 :=$ $^JC_2 \cup \{y_k,y_l\}$ of $\Gamma$.
If the chambers $C_1$ and $C_2$ are in the same orbit under the action of $Aut_I(\Gamma)$, then $^JC_1$ and $^JC_2$ are in the same orbit under the action of $Aut_I(^J\Gamma)$.
Suppose $C_1$ and $C_2$ are not in the same orbit. Hence, $C_2$ can be mapped to an adjacent chamber of $C_1$, namely $C'_1:=$ $^JC_1 \cup \{x'_k, x_l\}$ where $x'_k \neq x_k$. Hence there exists $g\in Aut_I(\Gamma)$ such that $g(C_2) = C'_1$. In particular, $g(^JC_2) =$ $^JC_1$, and thus $^JC_1$ and $^JC_2$ are in the same orbit under the action of $Aut_I(^J\Gamma)$. Therefore, $^J\Gamma$ is flag-transitive.
\end{proof}

We note that the rank $|I|-2$ truncations of a chiral hypertope need not be (regular) hypertopes as in most cases thinness is lost.
Nevertheless, we may be able to construct hypertopes of rank $r$ from flag-transitive geometries of rank $r-2$.

%A similar result, but involving rank $r-2$ residues of flags of type $\{0,1\}$ or $\{r-2,r-1\}$ was already known before in the case of abstract chiral polytopes
%i.e. thin  residually connected chiral geometries with a string Buekenhout diagram
%(see~\cite{SW1991},Proposition 9).

%--------------------------------------------THE END OF THE THING TO MOVE UP

In Section~\ref{section4}, 
we saw that to a regular hypertope and one of its chambers, we can associate a $C$-group (Theorem~\ref{cgroup}). 
We constructed regular hypertopes from C-groups under certain conditions.
We showed in example~\ref{ex4.5} that a given $C$-group $(G,S)$ does not necessarily give a coset geometry that is a regular hypertope but in Theorem~\ref{theorem46}, we showed that if $G$ is flag-transitive on the coset geometry, then that geometry is a hypertope.

As we shall see now, we can construct chiral and regular hypertopes from some C$^+$-groups $(G^+,R)$
where $R$ is a set of independent generators.

%Obviously, different sets of generators $R$ might give different $B$-diagrams.

We start by showing how to construct a coset geometry from a group and an independent generating set of this group.

\begin{construction}\label{hyper}
Let $I=\{1,\ldots, r-1\}$,  $G^+$ be a group and $R$ be an independent generating set of $G^+$.
Define $G^+_i := \langle \alpha_j | j \neq i \rangle$ for $i=1, \ldots, r-1$ and $G^+_0 := \langle \alpha_1^{-1}\alpha_j | j \geq 2 \rangle$.
The coset geometry $\Gamma(G^+,R) := \Gamma(G^+; (G^+_i)_{i\in \{0,\ldots,r-1\}})$ constructed using Tits' algorithm (see Proposition~\ref{tits}) is the geometry associated to the pair $(G^+,R)$. 
%ELISA: AND THE IP+????
%DIMITRI: WE DON'T NEED IP HERE. IT COMES AFTER
\end{construction}

The coset geometry $\Gamma(G^+,R)$ gives an incidence system using Tits algorithm. 
If this incidence system is a chiral hypertope, then $(G^+,R)$ is necessarily a $C^+$-group as we showed in Theorem~\ref{cplusgroup}, so in order to construct chiral hypertopes from coset geometries, it is necessary to start with $C^+$-groups.

If one looks at the rotation subgroup of the group $A_6$ of Example~\ref{ex4.5}, it is clear that a C$^+$-group does not necessarily give a coset geometry $\Gamma$ that is thin and strongly chamber connected and hence does not give automatically a hypertope.

\begin{theorem}\label{elisa}
Let $(G^+,R)$ be a $C^+$-group.
Let $\Gamma := \Gamma(G^+,R)$ be the coset geometry associated to $(G^+,R)$ using Construction~\ref{hyper}.
If $\Gamma$ is a hypertope and $G^+$ has two orbits on the set of chambers of $\Gamma$, then 
$\Gamma$ is chiral if and only if there is no automorphism of $G^+$ that inverts all the elements of $R$.
Otherwise, there exists an automorphism $\sigma\in Aut(G^+)$ that inverts all the elements of $R$ and the group $G^+$ extended by $\sigma$ is regular on $\Gamma$.
%$\Gamma$ is regular if and only if there is an automorphism of $G^+$ that inverts all the elements of $R$. Otherwise $\Gamma$ is chiral.
\end{theorem}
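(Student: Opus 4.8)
The plan is first to pin down exactly how $G^+$ sits inside $Aut_I(\Gamma)$, and then to match the regular/chiral dichotomy with the inverting automorphism.

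\textbf{Step 1 (a structural lemma).} I would show that $G^+$ acts \emph{freely} on the chambers, with its two orbits $O_1:=C_0G^+$ and $O_2$ of equal size $|G^+|$, and that adjacent chambers always lie in distinct orbits. Triviality of the Borel subgroup $\bigcap_i G^+_i$ follows from $IP^+$ exactly as in Lemma~\ref{Borel}, and the same computation gives $\bigcap_{j\neq i}G^+_j=\{1\}$; hence if an $i$-adjacent chamber $C_0^i$ equalled $C_0 g$ with $g\in G^+$, we would get $g\in\bigcap_{j\neq i}G^+_j=\{1\}$ and $C_0^i=C_0$, a contradiction. Thus every neighbour of a chamber in $O_1$ lies in $O_2$; taking $C_0^0$ as a representative of $O_2$, the same intersection computation shows its stabiliser is trivial, so $|O_1|=|O_2|=|G^+|$, and counting edges of the $r$-regular chamber-adjacency graph (which has no edge inside $O_1$) forces it to be bipartite with parts $O_1,O_2$. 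Consequently $Aut_I(\Gamma)$ has either one orbit — and then, being chamber- hence flag-transitive, $\Gamma$ is regular by thinness — or exactly two orbits $O_1,O_2$, in which case adjacency across the parts makes $\Gamma$ chiral. So ``$\Gamma$ chiral'' is equivalent to ``$\Gamma$ not regular'', and it remains only to match regularity with the inverting automorphism.

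\textbf{Step 2 (regular $\Rightarrow$ inverting automorphism).} If $\Gamma$ is regular then $G:=Aut_I(\Gamma)$ acts regularly on the $2|G^+|$ chambers, so $|G|=2|G^+|$ and $G^+$ is normal of index $2$ in $G$; moreover $G^+$ is precisely the colour-preserving rotation subgroup. By Theorem~\ref{cgroup} the pair $(G,S)$ with $S=\{\rho_0,\dots,\rho_{r-1}\}$ the distinguished generators is a $C$-group, and by the definition of the distinguished generators of $G^+$ one has $\alpha_i=\rho_0\rho_i$. Conjugation by $\rho_0$ is then an automorphism $\sigma$ of $G^+$ with $\sigma(\alpha_i)=\rho_0(\rho_0\rho_i)\rho_0=\rho_i\rho_0=\alpha_i^{-1}$, so $\sigma$ inverts every element of $R$.

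\textbf{Step 3 (inverting automorphism $\Rightarrow$ regular).} Conversely, given $\sigma\in Aut(G^+)$ inverting all of $R$, note that $\sigma^2=\mathrm{id}$, form $G:=G^+\rtimes\langle t\rangle$ with $t^2=1$ acting as $\sigma$, and set $\rho_0:=t$, $\rho_i:=t\alpha_i$. Then $\rho_i^2=t\alpha_i t\alpha_i=\sigma(\alpha_i)\alpha_i=1$, the $\rho_i$ are involutions generating $G$, and $\rho_0\rho_i=\alpha_i$, so $G^+$ is the rotation subgroup of $(G,S)$. The goal is to prove that $(G,S)$ is a $C$-group and that $G$ is flag-transitive on $\Gamma(G;(G_i))$ with $G_i:=\langle\rho_j\mid j\neq i\rangle$; Theorem~\ref{theorem46} then makes $\Gamma(G;(G_i))$ a regular hypertope, and since $G_i\cap G^+=G^+_i$ one identifies $\Gamma(G;(G_i))\cong\Gamma(G^+,R)=\Gamma$. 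As $t\notin G^+$ interchanges $O_1$ and $O_2$, the group $\langle G^+,\sigma\rangle=G$ is chamber-transitive; being a transitive subgroup of the free group $Aut_I(\Gamma)$ of order $2|G^+|$, it acts regularly, which is the asserted conclusion.

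\textbf{Main obstacle.} The work is all in Step 3: showing that the abstractly built $(G,S)$ really is a $C$-group and that its coset geometry is $\Gamma$ itself, i.e. that $\sigma$ lifts to a type-preserving automorphism $\hat\sigma$ of $\Gamma$. The delicate point is that $\sigma$ does \emph{not} fix the type-$0$ parabolic setwise — one computes $\sigma(G^+_0)=\alpha_1 G^+_0\alpha_1^{-1}$ — so the induced map must be the twisted assignment $G^+_0 g\mapsto G^+_0\alpha_1^{-1}\sigma(g)$ on type $0$ and $G^+_i g\mapsto G^+_i\sigma(g)$ on the remaining types; checking that this single map is well defined and incidence-preserving, equivalently deriving $IP$ for $(G,S)$ from $IP^+$ for $(G^+,R)$ (a converse to Proposition~\ref{C+}), is the crux. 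Everything else follows formally from the structural lemma of Step 1.
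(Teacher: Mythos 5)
Your Step 1 is correct and coincides with the paper's own core argument: the computation $\bigcap_{j\neq i}G^+_j=\{1\}$, obtained from IP$^+$, shows that no element of $G^+$ carries the base chamber $C$ to an $i$-adjacent chamber, freeness and the bipartite orbit structure follow, and your reduction of the whole statement to ``$\Gamma$ is regular if and only if some automorphism of $G^+$ inverts every element of $R$'' is a clean and logically complete way to organise the equivalence. The problem is that what you submit is a plan rather than a proof: both implications of that equivalence are left resting on verifications that you name explicitly but never perform.

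Concretely, two pieces are missing. First, in Step 2 the identity $\alpha_i=\rho_0\rho_i$ is not ``by definition'': in Theorem~\ref{elisa} the set $R$ is an abstract generating set of an abstract C$^+$-group, not a set of distinguished generators defined with respect to a chamber, so to prove $\alpha_i=\rho_0\rho_i$ you must show that right multiplication by $\alpha_i$ sends $C=\{G^+_0,\ldots,G^+_{r-1}\}$ to the chamber $(C^i)^0$, and for that you need the adjacent chambers in coset form, namely $C^i=(C\setminus\{G^+_i\})\cup\{G^+_i\alpha_i\}$ for $i\geq 1$ and $C^0=(C\setminus\{G^+_0\})\cup\{G^+_0\alpha_1^{-1}\}$; these incidence computations are precisely the bulk of the paper's proof and are absent from yours. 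Second, Step 3 --- which you yourself label ``the crux'' --- is only announced: you state, but do not prove, that the twisted map $G^+_0g\mapsto G^+_0\alpha_1^{-1}\sigma(g)$, $G^+_ig\mapsto G^+_i\sigma(g)$ ($i\geq 1$) is well defined and incidence preserving (equivalently, that your extension $G^+\rtimes\langle t\rangle$ is a C-group acting flag-transitively on a coset geometry isomorphic to $\Gamma$). The needed facts are that $\sigma(G^+_0)=\alpha_1G^+_0\alpha_1^{-1}$ while $\sigma(G^+_i)=G^+_i$ for $i\geq 1$, and that $\alpha_1^{-1}\in G^+_0G^+_j$ for every $j\geq 1$ (for $j\geq 2$ because $\alpha_1^{-1}\in G^+_j$, and for $j=1$ because $\alpha_1^{-1}=(\alpha_1^{-1}\alpha_2)\alpha_2^{-1}$); with these in hand the map is an automorphism of $\Gamma$ sending $C$ to $C^0$, hence fusing the two orbits. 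This short computation is the actual mathematical content of the second half of the theorem, and deferring it means the statement is not proved. I would add, for fairness, that your skeleton, once these verifications are inserted, is tighter than the paper's own proof: the paper performs the coset identification of the chambers $C^i$ but then declares the action of $\sigma$ ``obvious'', and it leaves the direction ``regular $\Rightarrow$ inverting automorphism'' (your Step 2), on which its chirality claim implicitly depends, without an explicit argument.
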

\begin{proof}
As $\Gamma$ is a hypertope, it is thin and residually connected, hence strongly chamber connected by Proposition~\ref{rcsc}.
This implies that $G^+$ acts freely on the chambers of $\Gamma$.

Let us show that the two orbits of chambers under the action of $G^+$ are obtained from the base chamber $C:=\{G_0^+, \ldots, G_{r-1}^+\}$ and its $i$-adjacent chamber.
Given the base chamber $C$, its $i$-adjacent chamber is $(C\setminus \{G_i^+\}) \cup \{G_i^+g\}$ where $g := \alpha_1^{-1}$ if $i=0$ and $g :=\alpha_i$ otherwise.
Indeed, if $i=0$, $G_0^+\alpha_1^{-1}$ contains $\alpha_j^{-1}$ for all $j=1, \ldots, r-1$ and thus $G_0^+\alpha_1^{-1}$ has at least one element in common with $G_i^+$ for every $i=1, \ldots, r-1$.
Hence $\{G_0^+\alpha_1^{-1},G_1^+, \ldots, G_{r-1}^+\}$ is the 0-adjacent chamber to $C$.
Similarly $G_i^+\alpha_i$ contains $\alpha_j^{-1}\alpha_i$ for all $j=1, \ldots, r-1$ and thus $G_i^+\alpha_i$ has at least one element in common with $G_j^+$ for every $j=0, \ldots, r-1$, with $j\neq i$. Also $G_0^+$ contains $\alpha_j^{-1}\alpha_i$ for every $j\neq i$ and hence $G_0^+$ is incident to $G_i^+\alpha_i$. Thus the set $(C\setminus \{G_i^+\}) \cup \{G_i^+\alpha_i\}$ is the $i$-adjacent chamber to $C$.

As $(G^+,R)$ is a $C^+$-group, there is no element of $G^+$ that maps $C$ to any of its $i$-adjacent chambers. This is due to the fact that, in order to map $C$ to its $i$-adjacent chamber, we need to fix all but one of the maximal parabolic subgroups, and the intersection of all but one maximal parabolic subgroups is the identity.
Thus for every $i=0, \ldots, r-1$, $C$ and its $i$-adjacent chamber are not in the same orbit under the action of $G^+$.
Hence any chamber in the orbit of $C$ under the action of $G^+$ will not be in the same orbit with its $i$-adjacent chamber under the action of $G^+$.

If there exists an element $\sigma\in Aut(G^+)$ that inverts every generator, then $\sigma$ obviously maps $C$ to its $0$-adjacent chamber and hence fuses the two chamber orbits. The group $G^+$ extended with $\sigma$ will therefore act regularly on $\Gamma$.
On the other hand, if no such element exists, $\Gamma$ has two orbits on its set of chambers, and two adjacent chambers are always in distinct orbits, hence $\Gamma$ is chiral.
\end{proof}
We conclude this section with a concrete construction leading to rank 4 regular and chiral hypertopes.
Whenever the group $[(3,3,3)_{(b,c)}]$ defined in Section~\ref{section5} is the automorphism group of a rank 3 hypertope, we extend it by an involution $\rho_3$ such that $\rho_2\rho_3$ is of order $p$, and in addition $\rho_3$ commutes with $\rho_0$ and $\rho_1$. We get a rank four C-group with the following diagram.
\begin{center}
\begin{picture}(80,55)
\put(0,47){\circle*{5}}
\put(40,27){\circle*{5}}
\put(80,27){\circle*{5}}
\put(0,7){\circle*{5}}
\put(0,7){\line(2,1){40}}
\put(0,47){\line(2,-1){40}}
\put(0,7){\line(0,1){40}}
\put(40,27){\line(1,0){40}}
% diametres et gonalite
\put(-15,5){$\rho_1^{\rho_0}$}
\put(-15,47){$\rho_1$}
\put(40,34){$\rho_2$}
\put(80,34){$\rho_3$}
\put(60,20){$p$}
\end{picture}
\end{center}

When $p=3$, $4$, $5$ and $6$, this group is a subgroup of index 2 in the Coxeter group $[6,3,p]$ (generated by $\rho_0$, $\rho_1$, $\rho_2$ and $\rho_3$), the symmetry group of a regular tessellation of hyperbolic $3$-space by horospherical cells $\{6,3\}$ (\cite{Coxeter1950}, ~\cite{Coxeter1954}). 

Adding the relations~(\ref{eq1}) to $[6,3,p]$, we obtain the universal regular polytopes $\{\{6,3\}_{(b,c)},\{3,p\}\}$ with $p=3,4,5$.
Finite regular polytopes in this family have been classified in~\cite{ARP}.
They are included in Table~\ref{pol63p} where we also list the chiral polytopes of that type.
We conjecture that this list is also complete based on~\cite{Col} and computations done using {\sc Magma}. 
With the exception of $\{\{6,3\}_{(1,1)},\{3,p\}\}$, as explained by Example~\ref{ex44}, each of these regular and chiral polytopes is a double cover of the corresponding hypertope $\{(3,3,3)_{(b,c)},p\}$ (whose  residues of $\rho_3$ are toroidal hypermaps, while the residues of $\rho_1$ and $\rho_1^{\rho_0}$ are toroidal maps). Regular hypertopes of this type are thus also classified and our conjecture extends to chiral hypertopes as well. 
Table~\ref{333p} lists the hypertopes obtained using this construction.
More precisely, they are obtained by using the following presentation for the rotational subgroup with $p$ and $\mathbf{s} = (a,b)$ as parameters.
\[G^+(p,a,b):= \langle x,y,z | x^3,
y^3,
z^p,
(x^{-1}z)^2,
(y^{-1}z)^2,
(x^{-1}y)^3,
(xy^{-1}x)^a(xy)^b
\rangle
\]
The case where $p=6$ is considerably more complicated and will be dealt with in another paper~\cite{FLW}.

\begin{table}
\begin{tabular}{|c|c|c|c|c|c|c|c|}
\hline
$p$&$\mathbf s$&$g$&Group&
%$IP^+$&Thin&RC&
Chiral/Regular\\
\hline
3 & $( 2 , 0 )$& 240 & $S_5\times C_2$&
%true & true & true &
regular \\
 & $( 3 , 0 )$& 1296 & $[1$ $1$ $2]^3 \rtimes C_2$&
 %true & true & true &
 regular \\
 & $( 4 , 0 )$& 15360 & $[1$ $1$ $2]^4 \rtimes C_2$&
 %true & true & true &
 regular \\
% & $( 1 , 1 )$& 2 & 4 & 48 & & true & true & true & regular \\
 & $( 1 , 2 )$& 336 & $PGL_2(7)$&
 %true & true & true &
 chiral \\
 & $( 1 , 3 )$& 2184 & $PSL_2(13)\times C_2$&
 % true & true & true &
 chiral \\
 & $( 1 , 4 )$& 8064 & $SL_2(7)\rtimes A_4\rtimes C_2$&
 % true & true & true &
  chiral \\
 & $( 2 , 2 )$& 2880 & $S_5\times S_4$&
 %true & true & true &
 regular \\
 & $( 2 , 3 )$ & 6840 & $PGL_2(19)$&
 %true & true & true &
 chiral \\
\hline
4 & $( 1 , 1 )$ & 288 & $S_3 \rtimes [3,4]$ &
% true & true & true &
regular \\
 & $( 2 , 0 )$ & 768 & $[3, 3, 4] \times C_2$&
 %true & true & true &
 regular \\
 & $( 1 , 2 )$ & 2016 & $PGL_2(7) \times S_3$&
 %true & true & true &
  chiral \\
\hline
5 & $( 2 , 0 )$&  28800 & $[3, 3, 5]\times C_2$&
% true & true & true &
regular \\
\hline
\end{tabular}
\caption{Finite polytopes of type $\{\{6,3\}_\mathbf{s},\{3,p\}\}$ with $p\in\{3,4,5\}$ (having $g$ flags)}\label{pol63p}
\end{table}

\begin{table}
\begin{tabular}{|c|c|c|c|c|}
\hline
$p$&$\mathbf s$&$g$&Group&
%$IP^+$&Thin&RC&
Chiral/Regular\\     
\hline
3 & $( 2 , 0 )$& 120 & $S_5$&
%true & true & true &
regular \\
 & $( 3 , 0 )$ & 648 & $[1$ $1$ $2]^3$ &
 %true & true & true &
  regular \\
 & $( 4 , 0 )$ & 7680 & $[1$ $1$ $2]^4$&
 %true & true & true &
 regular \\
%3 & $( 1 , 1 )$& 1 & 4 & 24 & & false & false & true & regular \\
 & $( 1 , 2 )$ & 168 & $PSL_2(7)$ &
 %true & true & true &
 chiral \\
 & $( 1 , 3 )$ & 1092 & $PSL_2(13)$ &
 %true & true & true &
  chiral \\
 & $( 1 , 4 )$ & 4032 & $SL_2(7)\rtimes A_4$&
 %true & true & true &
 chiral \\
 & $( 2 , 2 )$ & 1440 & $A_5\times S_4$&
 %true & true & true &
 regular \\
 & $( 2 , 3 )$ & 3420 & $PSL_2(19)$&
 %true & true & true &
 chiral \\
\hline
4  & $( 1 , 2 )$ & 1008 & $PSL_2(7)\times S_3$&
%true & true & true &
chiral \\
 & $( 2 , 0 )$ & 384 & $[3,3,4]$&
 % true & true & true &
 regular \\
\hline
5 & $( 2 , 0 )$ & 14400 & $[3,3,5]$ &
%true & true & true &
regular \\
\hline
\end{tabular}
\caption{Finite hypertopes of type $\{(3,3,3)_\mathbf{s},p\}$ with $p\in\{3,4,5\}$ (having $g$ flags)}\label{333p}
\end{table}

\section{Open problems and acknowledgements}\label{section9}
We conclude this paper with a series of open problems that we think are interesting to investigate in future work.

\begin{problem}
What is a minimal set of conditions for the IP$^+$ condition?
\end{problem}
The corresponding problem in polytopes has been solved in~\cite{Conder2013} by Conder and Oliveros.

%\begin{problem}
%Find a nice way of reversing the construction (from C-groups and $C^+$-groups to thin geometries).
%\end{problem}
\begin{problem}
Classify all finite locally toroidal incidence geometries of type $\{(3,3,3);p\}$.
\end{problem}
\begin{problem}
Find an example of a C-group that gives a thin, residually connected geometry of rank $\geq 4$ that is not flag-transitive.
\end{problem}
\begin{problem}
Find an example of a C-group of rank 3 that gives a geometry that is not thin, not residually connected and not flag-transitive.
\end{problem}

This research was supported by a Marsden grant (UOA1218) of the Royal Society of New Zealand, by NSERC and 
by the Portuguese Foundation for Science and Technology (FCT-Fundação para a Ciência e a Tecnologia), through CIDMA - Center for Research and Development in Mathematics and Applications, within project UID/MAT/04106/2013.

The authors would like to warmly thank an anonymous referee whose numerous comments helped improve greatly this article.

\bibliographystyle{plain}

\end{document}